\numberwithin{equation}{section}
\numberwithin{figure}{section}
\titleformat{\subsection}[runin]
{\bfseries} {\thesubsection{.}}{0.15cm}{}[.]
\titleformat{\subsubsection}[runin]
{\em}{\thesubsubsection{.}}{0.15cm}{}[.]
\newtheorem{theorem}{Theorem}[section]
\newtheorem{proposition}[theorem]{Proposition}
\newtheorem{lemma}[theorem]{Lemma}
\newtheorem{corollary}[theorem]{Corollary}
\theoremstyle{definition}
\newtheorem{definition}[theorem]{Definition}
\newtheorem{remark}[theorem]{Remark}
\newtheorem{problem}[theorem]{Problem}
\newtheorem{example}[theorem]{Example}
\newcommand\Ecal{\mathcal{E}}
\newcommand\Hcal{\mathcal{H}}
\newcommand\Cscr{\mathscr{C}}
\newcommand\Oscr{\mathscr{O}}
\newcommand\B{\mathbb{B}}
\newcommand\C{\mathbb{C}}
\newcommand\CP{\mathbb{CP}}
\newcommand\N{\mathbb{N}}
\newcommand\R{\mathbb{R}}
\newcommand\ggot{\mathfrak{g}}
\newcommand\igot{\mathfrak{i}}
\renewcommand\igot{\mathfrak{i}}
\newcommand\E{\mathrm{e}}
\renewcommand\imath{\igot}
\newcommand\hra{\hookrightarrow}
\newcommand\di{\partial}
\renewcommand\span{\mathrm{span}}
\newcommand\Id{\mathrm{Id}}
\newcommand\Aut{\mathrm{Aut}}
\def\span{\mathrm{span}}
\def\Ell1{\mathrm{Ell_1}}
\def\CEll1{\mathrm{CEll_1}}
\def\rmax{\mathrm{rmax}}
\begin{document}

\fancyhead[LO]{Oka domains in Euclidean spaces}
\fancyhead[RE]{F.\ Forstneri\v c, E. F. \ Wold} 
\fancyhead[RO,LE]{\thepage}

\thispagestyle{empty}


\begin{center}
{\bf \LARGE Oka domains in Euclidean spaces}

\vspace*{0.5cm}

{\large\bf  Franc Forstneri{\v c} and Erlend Forn\ae ss Wold} 
\end{center}

\vspace*{0.5cm}

{\small
\noindent {\bf Abstract}\hspace*{0.1cm}
In this paper we find surprisingly small Oka domains in Euclidean spaces $\C^n$ of dimension $n>1$ at the very limit of 
what is possible. Under a mild geometric assumption on a closed unbounded convex set $E$ in $\C^n$ 
we show that $\C^n\setminus E$ is an Oka domain. In particular, there are Oka domains which are only slightly bigger 
than a halfspace, the latter being neither Oka nor hyperbolic. 
This gives smooth families of real hypersurfaces $\Sigma_t\subset \C^n$ for $t\in\R$ dividing $\C^n$ in an unbounded 
hyperbolic domain and an Oka domain such that at the threshold value $t=0$ the hypersurface 
$\Sigma_0$ is a hyperplane and the character of the two sides gets reversed.  
More generally, we show that if $E$ is a closed set in $\C^n$ for $n>1$ whose projective closure $\overline E\subset\CP^n$ 
avoids a hyperplane $\Lambda\subset\CP^n$ and is polynomially convex in 
$\CP^n\setminus \Lambda\cong\C^n$, then $\C^n\setminus E$ is an Oka domain.
}

\noindent{\bf Keywords}\hspace*{0.1cm} 
Oka manifold, hyperbolic manifold, density property, projectively convex set
\vspace*{0.1cm}

\noindent{\bf MSC (2010):}\hspace*{0.1cm}  32Q56, 32E30, 32M17
%
%
%
%

\noindent {\bf Date: \rm 24 March 2022}


%
%
\section{Introduction}\label{sec:intro}  
A complex manifold $Y$ is said to be an Oka manifold if it admits many holomorphic maps $X\to Y$ from 
any Stein manifold $X$  (see \cite[Definition 5.4.1 and Theorem 5.4.4]{Forstneric2017E} and
\cite{Larusson2010NAMS}). In particular, every continuous map $X\to Y$ must be homotopic to a holomorphic map,
with Runge approximation on a compact holomorphically convex subset and interpolation on 
a closed complex subvariety of $X$ where the given map happens to be holomorphic.
Oka manifolds are at the heart of many existence theorems in complex analysis, with diverse applications;
see \cite{AlarconForstnericLopez2021,Forstneric2017E,Forstneric2022Oka,ForstnericLarusson2011}. 
Oka manifolds are at the opposite end of the spectrum from hyperbolic manifolds 
which do not admit any nonconstant holomorphic images of $\C$ (see Kobayashi \cite{Kobayashi1970}).

Most complex manifolds are neither hyperbolic nor Oka, but have a mixture of both properties. 
For example, a halfspace $\{(z',z_n)\in\C^n: \Im z_n>0\}$ is the product of a halfplane, which is hyperbolic,
and the affine space $\C^{n-1}$, which is Oka. Hence, no domain contained in a halfspace is Oka.
In this paper we show that complements of most closed unbounded convex sets in $\C^n$ for $n>1$ are Oka; 
see Theorems \ref{th:strictlyconvex}, \ref{th:convex}, and \ref{th:convexnoline}. In particular, there are Oka domains 
which are only slightly bigger than a halfspace. This represents a major advancement in the theory of Oka manifolds. 

Let $E$ be a closed domain in $\C^n$ with $\Cscr^1$ boundary.
Given a point $p\in bE$, we denote by $T_p bE$ the affine tangent hyperplane to $E$ at $p$ and by 
$T^\C_p bE$ the unique affine complex hyperplane in $T_p bE$ passing though $p$. 
If $E$ is convex then $E\cap T_p bE \subset bE$. One of our main results is the following; 
it is proved in Section \ref{sec:convex}. Another main result is Theorem \ref{th:main}.

%
%
\begin{theorem}\label{th:strictlyconvex}
If $E$ is a closed convex set with $\Cscr^1$ boundary in $\C^n$ for $n>1$ such that  
$E\cap T^\C_p bE$ does not contain an affine real halfline for any $p\in bE$, then $\C^n\setminus E$ is an Oka domain. 
\end{theorem}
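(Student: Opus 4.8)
The plan is to deduce the theorem from the general criterion of Theorem~\ref{th:main}: it suffices to produce a complex hyperplane $\Lambda\subset\CP^n$ disjoint from the projective closure $\overline E$ such that $\overline E$ is polynomially convex in the affine chart $\CP^n\setminus\Lambda\cong\C^n$. The first step is to reformulate the hypothesis in terms of the recession cone $\mathrm{rec}(E)=\{v:\ x+tv\in E\ \text{for all}\ t\ge 0\}$ (independent of $x\in E$ by convexity). Writing $V_p:=T^\C_p bE-p$ for the complex tangent direction, a ray $p+tw$ ($t\ge 0$) lies in $E\cap T^\C_p bE$ exactly when $w\in \mathrm{rec}(E)\cap V_p$; hence the assumption ``$E\cap T^\C_p bE$ contains no affine real halfline'' is equivalent to $\mathrm{rec}(E)\cap V_p=\{0\}$ for every $p\in bE$. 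I also record the standard fact that, writing $\Lambda_\infty$ for the hyperplane at infinity, the points of $\overline E$ on $\Lambda_\infty$ are exactly $\P(\mathrm{rec}(E))$, the projectivization of the recession cone.

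Next I would construct $\Lambda$. We may assume $bE\neq\emptyset$. Fix $p_0\in bE$, let $\nu$ be the outer unit normal there, and let $\phi(z)=\langle z,\nu\rangle$ be the Hermitian linear functional with $\ker\phi=V_{p_0}$, so that $E\subseteq\{\Re\,\phi(z-p_0)\le 0\}$ while $T^\C_{p_0}bE=\{\phi(z-p_0)=0\}$. For $s>0$ the complex affine hyperplane $H_s=\{\phi(z-p_0)=s\}$ is disjoint from $E$, since $E$ lies in the halfspace $\{\Re\,\phi(z-p_0)\le 0\}$ whereas $H_s\subset\{\Re\,\phi(z-p_0)=s\}$. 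Its hyperplane at infinity is $\P(V_{p_0})$, which misses $\P(\mathrm{rec}(E))$ by the reformulated hypothesis. Therefore $\Lambda:=\overline{H_s}$ satisfies $\overline E\cap\Lambda=\emptyset$, as required.

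It remains to verify polynomial convexity of $\overline E$ in the chart $\CP^n\setminus\Lambda$, which I expect to be the crux. The idea is to use complex (projective) convexity, which, unlike ordinary convexity, is preserved by projective automorphisms of $\CP^n$. First I would note that the hypothesis forbids $E$ from containing an affine complex line: if $\C v\subset\mathrm{rec}(E)$ then $\Re\langle e^{i\theta}v,\nu_p\rangle=\Re\bigl(e^{i\theta}\langle v,\nu_p\rangle\bigr)\le 0$ for all $\theta$, forcing $\langle v,\nu_p\rangle=0$, i.e.\ $v\in V_p$, contrary to $\mathrm{rec}(E)\cap V_p=\{0\}$. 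Granting this, a short case analysis on complex lines $\ell$ shows that $\overline E\cap\ell$ is always connected and simply connected in $\ell\cong\CP^1$ (a proper convex subset of $\C$, together with the point at infinity of $\ell$ precisely when the direction of $\ell$ is a recession direction, giving a disc, a segment, a point, an arc, or the empty set), so $\overline E$ is $\C$-convex in $\CP^n$. Since $\C$-convexity is a projective invariant and compact $\C$-convex sets are polynomially convex, the compact set $\overline E\subset\CP^n\setminus\Lambda\cong\C^n$ is polynomially convex, and Theorem~\ref{th:main} applies. The delicate points, and the place where the $\Cscr^1$ hypothesis and $n>1$ enter, are the exclusion of complex lines and the behaviour of the slices $\overline E\cap\ell$ at infinity; establishing genuine $\C$-convexity, rather than merely linear convexity, is the main obstacle.
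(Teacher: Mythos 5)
Your reduction to Theorem~\ref{th:main} is the right move, and your construction of the hyperplane $\Lambda$ is correct and essentially the paper's own (it is the content of Lemma~\ref{lem:convex-stable} and Corollary~\ref{cor:support}: the outward translate of $T^\C_{p_0}bE$ misses $E$, and its points at infinity $\P(V_{p_0})$ miss $\P(\mathrm{rec}(E))=\overline E\cap H_\infty$ because $\mathrm{rec}(E)\cap V_{p_0}=\{0\}$). The gap is exactly where you suspect it: the claimed $\C$-convexity of $\overline E$ is not established, and your case analysis of the slices $\overline E\cap\ell$ omits precisely the dangerous cases. For an affine line $L$ whose direction lies in $\P(\mathrm{rec}(E))$, the slice $E\cap L$ is an unbounded closed convex subset of $\C$ with recession cone $\mathrm{rec}(E)\cap\vec L\ne\{0\}$; if that cone is a full real line (so $E\cap L$ is a strip or a real line), then $(E\cap L)\cup\{\infty\}$ is \emph{not} simply connected and $\ell\setminus\overline E$ is disconnected, so $\C$-convexity would fail. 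These cases can in fact be excluded under the hypotheses, but it takes an argument: such an $L$ forces $E=E+\R u$, the hypothesis forces the outer normal to satisfy $\langle \imath u,\nu_p\rangle\ne 0$ on all of $bE$, and connectedness of $bE$ (which must itself be justified, ruling out slabs) then forces this quantity to have constant sign, whence every slice in the direction $\C u$ is a half-plane or empty rather than a strip. None of this is in your proposal, and it is where the $\Cscr^1$ hypothesis genuinely enters. Worse, you do not treat projective lines $\ell$ contained in the hyperplane at infinity: there $\overline E\cap\ell=\P(\mathrm{rec}(E)\cap W)$ for a $2$-plane $W$, and connectedness and simple connectedness of the projectivization of a real convex cone is a separate, nontrivial claim that you neither state nor prove; it is not clear that full $\C$-convexity of $\overline E$ even holds in general under the hypotheses.

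It is worth noting that the paper deliberately avoids $\C$-convexity and proves the strictly weaker property of \emph{projective convexity} (Definition~\ref{def:projconvex}): every point of $\CP^n\setminus\overline E$ lies on a hyperplane missing $\overline E$, and the family $\Hcal_{\overline E}$ of such hyperplanes is connected. This is verified by supporting-hyperplane arguments (Lemmas~\ref{lem:convex-stable} and \ref{lem:divide}, Corollary~\ref{cor:support}) together with a dilation trick for the connectedness of $\Hcal_{\overline E}$, and polynomial convexity then follows from Oka's criterion by sweeping a connected path of hyperplanes out to infinity (Theorem~\ref{th:projconvex}(a)). Your route would subsume this if $\C$-convexity were proved (the polynomial convexity of compact $\C$-convex sets is itself proved by the same duality-plus-Oka argument), but as it stands the crux of the theorem is asserted rather than proved.
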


Theorem \ref{th:strictlyconvex} is new for unbounded convex sets. For compact sets $E$ in $\C^n$
with $n>1$ it is known since 2020 that $\C^n\setminus E$ is Oka provided that $E$ is polynomially convex,
which includes all convex sets
(see Kusakabe \cite[Theorem 1.2 and Corollary 1.3]{Kusakabe2020complements} and \cite{ForstnericWold2020MRL}). 
These were the first examples of Oka domains in $\C^n$ whose complements have nonempty interior.
The only previously known examples of such domains were complements of hyperplanes and of tame (in particular, of algebraic)
complex subvarieties of codimension at least two in $\C^n$. 

A closed convex set $E$ in $\R^n$ is said to be {\em strictly convex} if the interior of the line segment between 
any pair of points in $E$ is contained in the interior of $E$. 
Equivalently, the boundary of $E$ does not contain any line segment. 

%
%
\begin{corollary}\label{cor:strictlyconvex} 
If $E$ is a closed strictly convex domain with $\Cscr^1$ boundary in $\C^n$ for $n>1$, 
then its complement $\C^n\setminus E$ is an Oka domain. 
\end{corollary}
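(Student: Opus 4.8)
The plan is to derive the corollary directly from Theorem \ref{th:strictlyconvex} by checking its geometric hypothesis for strictly convex domains. A closed strictly convex domain $E$ with $\Cscr^1$ boundary is in particular a closed convex set with $\Cscr^1$ boundary, so the only thing to verify is that $E\cap T^\C_p bE$ contains no affine real halfline for any $p\in bE$; once this is established, the theorem applies verbatim and gives that $\C^n\setminus E$ is Oka.

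First I would record the inclusion noted in the text preceding the theorem: since $E$ is convex, the tangent hyperplane $T_p bE$ is a supporting hyperplane, so $E$ lies on one side of it and touches it only along $bE$, whence $E\cap T_p bE\subset bE$ for every $p\in bE$. As $T^\C_p bE\subset T_p bE$ by definition, this yields
\[
 E\cap T^\C_p bE \ \subset\ E\cap T_p bE \ \subset\ bE .
\]
Thus every subset of $E\cap T^\C_p bE$ is automatically contained in the boundary $bE$.

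Now suppose, for contradiction, that $E\cap T^\C_p bE$ contained an affine real halfline $\ell$ for some $p\in bE$. By the inclusion above we would have $\ell\subset bE$. But $\ell$ is unbounded, and in particular contains nondegenerate line segments, so $bE$ would contain a line segment; this contradicts strict convexity, under which the boundary of $E$ contains no line segment. Hence $E\cap T^\C_p bE$ contains no affine real halfline for any $p\in bE$, the hypothesis of Theorem \ref{th:strictlyconvex} is satisfied, and the conclusion follows.

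There is essentially no obstacle here, since all the analytic content resides in Theorem \ref{th:strictlyconvex}; the corollary merely isolates strict convexity as a clean sufficient condition. The only point deserving a word of care is the reduction $E\cap T^\C_p bE\subset bE$, which relies on $T_p bE$ being a genuine supporting hyperplane — valid precisely because $E$ is convex with $\Cscr^1$ boundary.
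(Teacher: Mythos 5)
Your proof is correct and is exactly the intended argument: the paper records $E\cap T_pbE\subset bE$ for convex $E$ just before Theorem \ref{th:strictlyconvex} and defines strict convexity precisely by the absence of line segments in $bE$, so a halfline in $E\cap T^\C_pbE$ would immediately violate that. Nothing further is needed.
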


There are many examples satisfying Theorem \ref{th:strictlyconvex} which are of the form
\begin{equation}\label{eq:graph}
	E=\{(z',z_n)\in \C^n: \Im z_n \ge \phi(z',\Re z_n)\},
\end{equation}
where $\phi$ is a convex function of class $\Cscr^1$. (Here, $\Re$ and $\Im$ denote, respectively,
the real and the imaginary part.) A simple example is the closure of the Siegel upper halfspace 
\begin{equation}\label{eq:Siegel}
	\bigl\{z=(z',z_n)\in \C^n: \Im z_n > |z'|^2\bigr\}.
\end{equation}
This domain is biholomorphic to the ball $\{w\in \C^n:|w|<1\}$ via the Cayley transform
\begin{equation}\label{eq:Cayley}
	z=\Phi(w',w_n) = \imath \left(  \frac{w'}{1-w_n},\frac{1+w_n}{1-w_n}\right)\!.
\end{equation}
(See Rudin \cite[Sec.\ 2.3]{Rudin2008}.)
Its boundary $\{\Im z_n = |z'|^2\}$ is strongly convex in the $z'$ direction and 
is foliated by translates of the $\Re z_n$ axis. Theorem \ref{th:strictlyconvex} 
shows that the Siegel lower halfspace $\{(z',z_n)\in \C^n: \Im z_n < |z'|^2\bigr\}$ is an Oka domain.

Theorem \ref{th:strictlyconvex} and Corollary \ref{cor:strictlyconvex} imply the following interesting phenomenon.
Assume that $E$ is a stricty convex set of the form \eqref{eq:graph}.
Equivalently, $\phi$ is a strictly convex function, meaning that 
for every pair of distinct points $a,b\in \C^{n-1}\times \R$ we have that
\[
	\phi(ta+(1-t)b) < t\phi(a)+(1-t) \phi(b)\ \ \text{for all $0<t<1$}.
\]
Consider the $1$-parameter family of real hyperplanes
\[
	\Sigma_t=\{(z',z_n)\in \C^n: \Im z_n = t \phi(z',\Re z_n)\}\quad \text{for}\ t\in\R.
\]
If $t>0$, the convex domain $\Omega_t^+=\{\Im z_n > t \phi(z',\Re z_n)\}$ above $\Sigma_t$ does not contain 
any affine complex line, so it is hyperbolic (see \cite{Barth1980,BracciSaracco2009}), 
while the domain $\Omega_t^-=\{\Im z_n < t \phi(z',\Re z_n)\}$ below $\Sigma_t$ is Oka by Theorem 
\ref{th:strictlyconvex}. For $t<0$ the picture is reversed, while at $t=0$ the hyperplane $\Sigma_0=\{\Im z_n=0\}$ 
splits $\C^n$ in a pair of halfspaces. The same conclusion holds if we rescale the Siegel domain \eqref{eq:Siegel},
or any domain of the form $\{\Im z_n\ge \phi(z')\}$ where $\phi$ is strictly convex.

These are the first known examples of splitting $\C^n$ by a smooth family of hypersurfaces 
into pairs of an unbounded hyperbolic domain and a (necessarily unbounded) Oka domain 
such that the nature of the two domains gets reversed at some value of the parameter. 

There are examples in the literature of holomorphic families of compact Oka manifolds degenerating to 
a non-Oka manifold; see \cite[Corollary 5]{ForstnericLarusson2014IMRN}. A recent example with open manifolds 
(see \cite[Theorem 10.1]{Forstneric2022Oka}) gives a holomorphic fibration $X\to \C$, with
$X$ is a Stein domain in $\C^3$, that is trivial over $\C^*=\C\setminus\{0\}$ with fibres being 
Fatou--Bieberbach domains in $\C^2$, which degenerate over $0$ to the product of a disc with $\C$.
However, the reversal of the nature of the two sides, observed above, does not 
occur in this example. We also mention that there are Fatou--Bierberbach domains with 
hyperbolic complements. Indeed, every closed complex submanifold of $\C^n$ is contained in a Fatou--Bierberbach domain 
(see Wold \cite{Wold2005}) and there are proper holomorphic embeddings $\C^{n-1}\hra\C^n$ with 
hyperbolic complements (see Buzzard and Forn\ae ss \cite{BuzzardFornaess1996}
and Borell and Kutzschebauch \cite{BorellKutzschebauch2006}).
However, it is not known whether there is a Fatou--Bierberbach domain whose closure has Oka complement.

%
%
Earlier examples of Oka domains with big complements in $\C^n$ for $n\ge 3$ were found
by Kusakabe \cite[Theorem 1.6]{Kusakabe2020complements}. He showed that for any 
closed polynomially convex set $E$ contained in a set $\{(z',z'')\in \C^{n-2}\times \C^2:|z''|\le c(1+|z'|)\}$ 
for some $c>0$, the complement $\C^n\setminus E$ is Oka. Nevertheless, Oka domains of this type are much 
bigger than some of those given by Theorem \ref{th:strictlyconvex}, especially in low dimensions.
There are also examples of compact non-polynomially convex sets in $\C^n$ for $n>1$ 
with Oka complements; see \cite[Theorem 4.10]{Forstneric2022Oka}. In particular, the complement of 
a compact rectifiable Jordan curve in $\C^n$ for $n>1$ is Oka.

%
%

We now describe more general closed unbounded convex sets in $\C^n$ with Oka complements. These results use 
Theorem \ref{th:strictlyconvex} or Corollary \ref{cor:strictlyconvex}, 
together with approximation of closed convex sets from the outside by more regular ones.
%
%
%
%
We shall say that a convex function $\phi:\R^n\to\R$ is {\em irreducible} if it is not of the form $\phi=\psi\circ P+l$ 
where $P:\R^n\to \R^m$ is a linear projection with $m<n$, $\psi$ is a convex function on $\R^m$, and $l$ is a linear 
function on $\R^n$. This means that $\phi$ is not a convex function of a smaller number of variables 
which is linear in the remaining variables.

\begin{theorem}\label{th:convex}
If $\phi$ is an irreducible convex function on $\C^{n-1}\times\R$, then the domain
\[
	\Omega_\phi =\{(z',z_n)\in\C^n : \Im z_n<\phi(z',\Re z_n)\}
\]
 is Oka. The same is true for domains of the form
\[
	\Omega_\phi =\{(z',z_n)\in\C^n : \Im z_n<\phi(z')\} 
\]
where $\phi:\C^{n-1} \to \R$ is an irreducible convex function. 
\end{theorem}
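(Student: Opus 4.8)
The plan is to realize $\Omega_\phi$ as the complement of a closed convex set and to exhaust it by Oka domains coming from Theorem~\ref{th:strictlyconvex} and Corollary~\ref{cor:strictlyconvex}. Write $E=\{(z',z_n)\in\C^n:\Im z_n\ge \phi(z',\Re z_n)\}$, so that $\Omega_\phi=\C^n\setminus E$. Since $\phi$ is convex, $E$ is exactly the epigraph of $\phi$ regarded as a convex function on $\R^{2n-1}$ (coordinates $(z',\Re z_n)$, value $\Im z_n$), and in particular $E$ is closed and convex. I would first record the standard fact that the Oka property passes to increasing unions: if $\Omega=\bigcup_j\Omega_j$ with $\Omega_j\subset\Omega_{j+1}$ open and each $\Omega_j$ Oka, then $\Omega$ is Oka. (This follows from the convex approximation property characterization of Oka manifolds, since a holomorphic map from a neighborhood of a compact convex set has relatively compact image, hence lands in some $\Omega_j$.) Thus it suffices to produce closed convex sets $E_j\supset E$ with $\Cscr^1$ boundary, satisfying the hypotheses of Theorem~\ref{th:strictlyconvex}, and with $\bigcap_j E_j=E$; then the $\C^n\setminus E_j$ are Oka, increase, and exhaust $\Omega_\phi$.

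The construction of the $E_j$ is where irreducibility enters, and it is the main point. The key observation is that a convex function $\psi$ on $\R^m$ is irreducible precisely when its epigraph contains no affine line: being affine along a direction $v$ is equivalent to the epigraph containing a line in the direction $(v,\text{slope})$, which in turn is equivalent to the reducible form $\psi=\chi\circ P+l$. Hence I would prove the following convex-analytic lemma: \emph{an irreducible convex function on $\R^m$ is the limit of an increasing sequence of smooth strictly convex functions lying below it}. The idea is that, the epigraph being line-free, one may enlarge it from the outside by smooth strictly convex sets shrinking back to it; equivalently, one rounds the graph while matching its asymptotic (linear) growth in every direction, the absence of affine directions guaranteeing that a genuinely strictly convex minorant can be made to converge back to $\phi$. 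Monotonicity is arranged by passing to smoothed running maxima.

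With the lemma in hand I would finish as follows. In the first form, apply the lemma to $\phi$ on $\C^{n-1}\times\R\cong\R^{2n-1}$ to get smooth strictly convex $\phi_j\nearrow\phi$ with $\phi_j\le\phi$, and set $E_j=\{\Im z_n\ge\phi_j(z',\Re z_n)\}$. Each $E_j$ is then a strictly convex domain with $\Cscr^1$ boundary in $\C^n$, so $\C^n\setminus E_j$ is Oka by Corollary~\ref{cor:strictlyconvex}; since $\phi_j\nearrow\phi$ we have $\bigcap_j E_j=E$ and the increasing union of the complements is $\Omega_\phi$. In the second form $\phi=\phi(z')$ I would instead apply the lemma on $\C^{n-1}\cong\R^{2n-2}$ to obtain smooth strictly convex $\phi_j(z')\nearrow\phi$ and set $E_j=\{\Im z_n\ge\phi_j(z')\}$. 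Now $E_j$ is \emph{not} strictly convex in $\C^n$, because its boundary $\{\Im z_n=\phi_j(z')\}$ contains the real lines in the $\Re z_n$ direction; however these lines are never complex tangential, since the complex tangent hyperplane $T^\C_p bE_j$ omits the $\Re z_n$-direction. A short computation shows that a real halfline lies in $E_j\cap T^\C_p bE_j$ only if $\phi_j$ is affine along a ray in the $z'$ variables, which strict convexity of $\phi_j$ excludes; hence $E_j$ satisfies the hypothesis of Theorem~\ref{th:strictlyconvex} and $\C^n\setminus E_j$ is Oka. Again $\bigcap_j E_j=E$, and the conclusion follows.

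The hard part is the convex-analytic lemma, i.e.\ approximating an irreducible convex function from below by smooth strictly convex ones. Naive devices such as mollification or the Moreau envelope produce minorants that are merely $\Cscr^{1}$ or $\Cscr^{1,1}$ and typically retain affine pieces, while simply adding a strictly convex term like $\varepsilon\sqrt{1+|x|^2}$ and subtracting a constant overshoots $\phi$ along directions of linear growth. Controlling the matching of asymptotic growth in every direction---precisely what line-freeness of the epigraph makes possible---is the crux, and I would expect to spend most of the effort there; the passage from Theorem~\ref{th:strictlyconvex} and Corollary~\ref{cor:strictlyconvex} to the theorem via increasing unions is then formal.
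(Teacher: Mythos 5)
Your proposal follows essentially the same route as the paper: realize $\Omega_\phi$ as the complement of the convex epigraph, approximate $\phi$ from below by an increasing sequence of smooth strictly convex functions, apply Corollary~\ref{cor:strictlyconvex} (resp.\ Theorem~\ref{th:strictlyconvex}) to each approximating domain, and conclude by stability of the Oka property under increasing unions. The one point where you stop short is exactly where the paper does not need to work: the convex-analytic lemma you isolate as the crux --- that an irreducible convex function is the increasing uniform limit of smooth strictly convex minorants --- is precisely Azagra's approximation theorem \cite[Theorem 1.1 and Proposition 1.6]{Azagra2013}, which the paper simply cites; your heuristic (``round the epigraph from the outside'') is not a proof, and you rightly observe that mollification, Moreau envelopes, and adding $\varepsilon\sqrt{1+|x|^2}$ all fail, but no new argument is required since the result is in the literature. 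Your verification in the second case that $E_j\cap T^\C_p bE_j$ contains no halfline (the supporting-tangent-plane inequality from convexity forces affineness along the ray, which strict convexity of $\phi_j$ in $z'$ excludes) is correct and in fact slightly more detailed than the paper's one-line remark.
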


\begin{proof}
By Azagra \cite[Theorem 1.1 and Proposition 1.6]{Azagra2013} the condition that $\phi$ is irreducible implies that  
for every $\epsilon>0$ there is a smooth strictly convex function $\psi:\C^{n-1}\times\R\to\R$ such that
$\phi-\epsilon<\psi <\phi$. Hence, the domain $\Omega_{\psi}=\{\Im z_n<\psi(z',\Re z_n)\}$
is Oka by Corollary \ref{cor:strictlyconvex}. This gives an increasing sequence
$\phi_1<\phi_2<\phi_3<\cdots$ of smooth strictly convex functions on $\C^{n-1}\times\R$ 
converging uniformly to $\phi$ such that the sequence of Oka domains $\Omega_{\phi_j}$ 
increases to the domain $\Omega_\phi$ as $j\to\infty$. By \cite[Proposition 5.6.7]{Forstneric2017E} it follows 
that $\Omega_\phi$ is Oka. A similar argument holds in the second case, where the new domain 
$\Omega_{\psi}=\{\Im z_n<\psi(z')\}$ is Oka by Theorem \ref{th:strictlyconvex} since the 
real lines contained in $b\Omega_{\psi}=\{\Im z_n=\psi(z')\}$ 
(in the $\Re z_n$ direction) are not complex tangent to the boundary. 
\end{proof}

Let us illustrate Theorem \ref{th:convex} by an example.

%
%
\begin{example}\label{ex:cone}
Every open set in $\C^n$ of the form
\[
	\Im z_n <  c |\Re z_n| + \sum_{j=1}^{n-1} \bigl(a_j |\Re z_j|+ b_j|\Im z_j|\bigr) 
\]
for $c\ge 0$ and strictly positive numbers $a_1,\ldots,a_{n-1}, b_1,\ldots, b_{n-1}$ is an Oka domain. 
\end{example}

%
%


We also have the following result. 

\begin{theorem}\label{th:convexnoline}
If $E$ is a closed convex set in $\C^n$ for $n>1$ which does not contain any affine real line, 
then $\mathbb C^n\setminus E$ is an Oka domain. 
\end{theorem}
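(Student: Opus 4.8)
The plan is to deduce Theorem~\ref{th:convexnoline} from Corollary~\ref{cor:strictlyconvex} by an outer approximation, exactly as Theorem~\ref{th:convex} was deduced, using the stability of the Oka property under increasing unions. Concretely, it suffices to produce a decreasing sequence $E_1\supseteq E_2\supseteq\cdots$ of closed, smooth, strictly convex sets in $\C^n$ with $\bigcap_{j\ge1}E_j=E$. Granting this, Corollary~\ref{cor:strictlyconvex} makes each complement $\C^n\setminus E_j$ an Oka domain, these domains increase to $\C^n\setminus E=\bigcup_j(\C^n\setminus E_j)$, and $\C^n\setminus E$ is then Oka by \cite[Proposition 5.6.7]{Forstneric2017E}. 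Thus the whole content of the theorem is the following convex-geometric statement: \emph{a closed convex set containing no affine real line is the intersection of a decreasing sequence of smooth strictly convex sets.}

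The hypothesis enters through the recession cone $C$ of $E$ (the set of directions $v$ with $x+\R_{\ge0}v\subseteq E$ for $x\in E$). A closed convex set contains an affine real line if and only if $C$ contains a line, so our assumption is precisely that $C$ is pointed. Pointedness is equivalent to the dual cone $C^*$ having nonempty interior, and I would use this to control the approximation at infinity: any outer approximant has recession cone containing $C$, and I want to keep these recession cones pointed so that the approximants can be strictly convex (a strictly convex set has no segment in its boundary, which forces its recession cone to be pointed, since a line in the recession cone would have to lie in a supporting hyperplane and hence in the boundary). The model cases to keep in mind are the quadrant $\{x,y\ge 0\}$ and the round cone, which are decreasing intersections of the strictly convex regions bounded by the hyperbolas $(x+\epsilon)(y+\epsilon)=\epsilon^2$, respectively by shifted hyperboloids, all having the \emph{same} pointed recession cone as the limit set.

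For the construction I would first reduce to the case that $E$ is full-dimensional; if $E$ lies in a proper affine subspace one approximates it within that subspace and thickens transversally with a strictly convex profile, which is routine and strictly easier. In the full-dimensional case I would combine two operations: a smoothing/outer step (e.g.\ Minkowski sums $E+r\overline B$, or mollification of the support function), giving smooth convex outer approximants decreasing to $E$ but typically not strictly convex on flat pieces of the boundary; and a strict-convexification step, rounding the flat pieces by a small smooth strictly convex deformation localized in the directions in which $E$ is bounded, so that strict convexity is gained there while the recession cone stays pointed and the sets still decrease to $E$. The main obstacle is precisely carrying out these two steps \emph{simultaneously and in a nested fashion}: one cannot simply intersect finitely many smooth strictly convex bodies, since that reintroduces boundary corners and destroys the $\Cscr^1$ regularity required by Corollary~\ref{cor:strictlyconvex}, while naive strict-convexification by intersecting with strictly convex bodies either creates such corners or renders the set bounded and thus no longer exhausting the unbounded directions of $E$. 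Reconciling smoothness, strict convexity, unboundedness, and monotonicity in a single decreasing sequence is the heart of the matter; everything else is formal given Corollary~\ref{cor:strictlyconvex} and \cite[Proposition 5.6.7]{Forstneric2017E}.
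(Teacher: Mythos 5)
Your top-level reduction is exactly the paper's: produce a decreasing sequence $E_1\supset E_2\supset\cdots$ of smooth strictly convex sets with $\bigcap_j E_j=E$, apply Corollary~\ref{cor:strictlyconvex} to each complement, and conclude by the increasing-union property \cite[Proposition 5.6.7]{Forstneric2017E}. But the convex-geometric statement you isolate as ``the whole content of the theorem'' is precisely what you do not prove: you end by declaring that reconciling smoothness, strict convexity, unboundedness and monotonicity is ``the heart of the matter,'' which is an acknowledgement of the gap rather than an argument. The paper devotes a separate result to exactly this point (Theorem~\ref{thm:int} in Section~\ref{sec:intersection}), and its mechanism is different from the Minkowski-sum/mollification route you sketch.

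The paper's construction has two ingredients you are missing. First, the no-line hypothesis enters not through the recession cone directly but through convexity of the closure $\overline E\subset\RP^m$ in the sense of \cite{AnderssonPassareSigurdsson2004}: for each $p\notin E$ there is a hyperplane through $p$ whose projective closure misses $\overline E$, hence a whole cone around an affine hyperplane avoiding $E$. In coordinates where $p=0$, that hyperplane is $\{x_m=0\}$ and $E\subset\{x_m\ge f(x_1,\ldots,x_{m-1})+\epsilon\}$ for a smooth strongly convex $f$ whose graph lies in a slightly smaller cone; the function $\rho=\E^{f(x_1,\ldots,x_{m-1})-x_m}-1$ is then \emph{globally} strongly convex on $\R^m$ (the rank-one term $\nabla g\,\nabla g^{T}$ in the Hessian of $\E^{g}$ supplies positive definiteness in the $x_m$-direction, where $\mathrm{Hess}\,g$ degenerates), vanishes at $p$, and is $\le\E^{-\epsilon}-1<0$ on $E$. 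This yields countably many global strongly convex separating functions $\rho_j$ with $E=\bigcap_j\{\rho_j\le 0\}$ and $\rho_j\le-1$ on $E$. Second, the corner problem you rightly worry about when intersecting strictly convex bodies is resolved by the regularized maximum \eqref{eq:rmax}: $\tau_{k+1}=\rmax\{\tau_k,\rho_{k+1}\}$ is again smooth and strongly convex, $0$ is a regular value because $\tau_{k+1}$ is strictly negative on $E$, and the sublevel sets $E_k=\{\tau_k\le 0\}$ are nested, smoothly bounded, strongly convex, and shrink to $E$. Unboundedness is a non-issue in this formulation, since each $E_k$ is a sublevel set of a single global strongly convex function rather than an intersection of bodies. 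Without these two devices (or a worked-out substitute for them) your proposal establishes only the formal reduction, not the theorem.
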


\begin{proof}
By Theorem \ref{thm:int} there is a nested sequence $E_1\supset E_2\supset E_3\supset\cdots$  of smoothly 
bounded strictly convex sets in $\C^n$ such that $E=\bigcap_{j=1}^\infty E_j$. By Corollary \ref{cor:strictlyconvex}
the domain $\Omega_j=\mathbb C^n\setminus E_j$ is Oka for every $j\in\N$. Hence, 
$\C^n\setminus E=\bigcup_{j=1}^\infty\Omega_j$ is the increasing union of domains $\Omega_j$, 
so it is Oka by \cite[Proposition 5.6.7]{Forstneric2017E}.
\end{proof}

%
%
The results presented so far show that complements of most closed convex sets in $\C^n$ for $n>1$ are Oka. 
They also give a partial answer to \cite[Problem 4.13]{Forstneric2022Oka}, 
asking whether it is possible to characterise Oka domains in $\C^n$ in terms of geometric properties 
of their boundaries, in analogy to the classical Levi problem characterizing smoothly bounded domains 
of holomorphy as the Levi pseudoconvex ones. Since the biholomorphically invariant version of 
strong convexity is strong pseudoconvexity, it is natural to ask the following.

%
%
\begin{problem}\label{prob:pseudoconcave}
\begin{enumerate}[\rm (a)]
\item Is every domain with connected strongly Levi pseudoconcave boundary in $\C^n$ for $n>1$ an Oka domain?
\item Is every smoothly bounded Oka domain in $\C^n$ Levi pseudoconcave?
\end{enumerate}
\end{problem}

Part (a) does not follow from Theorem \ref{th:strictlyconvex} since the latter assumes (strict) concavity 
in a global holomorphic coordinate system. Note that an Oka domain cannot have any local peak points for 
plurisubharmonic functions as this would yield a nonconstant bounded plurisubharmonic function on the domain. 
In particular, an Oka domain has no strongly pseudoconvex boundary points.

Part (b) of the above problem is a kind of {\em inverse Levi problem}. It has been known since Oka's work in 1940s
(see \cite[Chaps.\ VI and IX]{Oka1984}) that a smoothly bounded domain in $\C^n$ is a domain of holomorphy
(equivalently, a Stein domain) if and only if its boundary is Levi pseudoconvex.
Oka manifolds are in many ways dual to Stein manifolds, a fact made precise by 
L\'arusson's model category for Oka theory (see \cite{Larusson2004} and \cite[Sect.\ 7.5]{Forstneric2017E})
in which Oka manifolds are fibrant and Stein manifolds are cofibrant. It is therefore natural to expect that 
these two classes of domains in $\C^n$ are also dual to each other in the geometric sense. 
In particular, it may transpire that a smoothly bounded domain in $\C^n$ for $n>1$ which is both Oka and Stein
has Levi flat boundary. If true, this would be a truly interesting new paradigm in complex analysis.

Here is another open problem.

\begin{problem}
Is there a smooth real hypersurface $\Sigma$ in $\C^n$ for $n>1$ whose complement $\C^n\setminus \Sigma$
is a union of Oka domains? The same question for $\CP^n$.
\end{problem}

In dimension $n=2$, a smooth hypersurface splitting $\C^2$ or $\CP^2$ into a union of Oka domains is necessarily
Levi-flat since the boundary of an Oka domain cannot contain a Levi strongly 
pseudoconvex point. The question on the existence of a smooth Levi-flat hypersurface in $\CP^2$ 
is a well-known and long-standing open problem, and the answer is known to be negative in $\CP^n$
for $n>2$; see Siu \cite{Siu2000}. On the other hand, Stens{\o}nes \cite{Stensones1997}
constructed Fatou--Bieberbach domains in $\C^n$ for any $n>1$ having smooth boundaries.
It is not known whether there is such a domain whose closure has Oka complement.

We now present our main result from which Theorem \ref{th:strictlyconvex} and other results in the paper are derived.
We consider $\C^n$ as an affine domain in the projective space $\CP^n=\C^n\cup H$, 
where $H=\CP^n\setminus \C^n \cong\CP^{n-1}$  is the hyperplane at infinity. 
Given a closed subset $E\subset\C^n$, we denote by $\overline E$ its topological closure in $\CP^n$. 
The main idea is to look at the closure $\overline E$ of a closed convex set as in Theorem 
\ref{th:strictlyconvex} in another affine chart on $\CP^n$ in which it becomes a
compact polynomially convex set. 

%
%
\begin{theorem}\label{th:main}
If $E$ is a closed subset of $\C^n$ for $n>1$ and $\Lambda\subset \CP^n$ is 
a projective hyperplane such that $\overline E\cap \Lambda=\varnothing$ and $\overline E$ is 
polynomially convex in $\CP^n\setminus\Lambda$, then $\C^n\setminus E$ is Oka.
\end{theorem}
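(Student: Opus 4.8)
The plan is to reduce the statement to the known case of compact complements together with Kusakabe's localization principle, the one genuinely new difficulty being the passage across the hyperplane at infinity. First I would change the affine chart. Since $\overline E\cap\Lambda=\varnothing$ and $\overline E$ is closed in the compact space $\CP^n$, the set $\overline E$ is a \emph{compact} subset of the chart $\CP^n\setminus\Lambda\cong\C^n$, and by hypothesis it is polynomially convex there; write $K:=\overline E$ for this compact polynomially convex set. In this chart the original hyperplane at infinity $H$ becomes an affine complex hyperplane $L:=H\cap(\CP^n\setminus\Lambda)$, and $H\neq\Lambda$ whenever $E$ is unbounded, since then $\overline E\cap H\neq\varnothing$ while $\overline E\cap\Lambda=\varnothing$. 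Comparing complements in $\CP^n$ gives
\[
  \C^n\setminus E=\CP^n\setminus(H\cup\overline E)\cong\C^n\setminus(K\cup L),
\]
so it suffices to prove that the complement in $\C^n$ of the union of a compact polynomially convex set $K$ and a complex affine hyperplane $L$ is an Oka domain. The essential feature is that $K\cap L=\overline E\cap H$ is in general nonempty, so $K\cup L$ is \emph{unbounded} and Kusakabe's theorem on compact complements does not apply verbatim.

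The two tools I would combine are (i) Kusakabe's theorem that $\C^n\setminus K$ is Oka for every compact polynomially convex $K\subset\C^n$ with $n>1$ (see \cite{Kusakabe2020complements} and \cite{ForstnericWold2020MRL}), and (ii) Kusakabe's localization principle: a complex manifold is Oka as soon as every one of its points has a Zariski-open Oka neighborhood. Tool (i) yields at once that $\C^n\setminus K=\CP^n\setminus(\Lambda\cup\overline E)$ is Oka; more generally, for every projective hyperplane $\Lambda'$ disjoint from $\overline E$ for which $\overline E$ is polynomially convex in $\CP^n\setminus\Lambda'$, the Zariski-open set $\CP^n\setminus(\Lambda'\cup\overline E)$ is Oka. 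Letting $\Lambda'$ range over a sufficiently rich family of such hyperplanes, with empty common intersection, and invoking (ii), one obtains that $\CP^n\setminus\overline E$ is Oka. This step needs a stability lemma: a small perturbation of $\Lambda$ keeps $\overline E$ disjoint from $\Lambda'$ (automatic by compactness) and preserves its polynomial convexity in the corresponding chart. The latter is the one delicate point here, since polynomial convexity is not a projectively invariant notion.

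The hard part is the final passage from $\CP^n\setminus\overline E$ to $\C^n\setminus E=(\CP^n\setminus\overline E)\setminus H$, that is, the deletion of the hyperplane $H$. This is genuinely nontrivial: removing a hypersurface does not preserve the Oka property in general, and, as the identity $\C^n\setminus E\cong\C^n\setminus(K\cup L)$ with $K\cap L\neq\varnothing$ shows, no Zariski-open subset of $\C^n\setminus E$ is the complement of a compact set in an affine chart (its complement always contains the full hyperplane $H$, which cannot be absorbed into a compact set). Hence tool (i) can never be applied directly to the local pieces, and the Oka neighborhoods furnished for (ii) must be Oka manifolds of a different type. I expect this to be the main obstacle.

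My plan for it is a relative form of the localization argument carried out along $H$. For points away from $H$ one uses the Oka charts $\CP^n\setminus(\Lambda'\cup\overline E)$; near a point of $H\setminus\overline E$ the domain is modeled on $\C^n$ with a hyperplane deleted, whose complement $\C^*\times\C^{n-1}$ is Oka as a product of Oka manifolds. The task is then to glue these local pictures, and for this I would construct dominating sprays on $\C^n\setminus(K\cup L)$ from the density property of $\C^n$ via Andersén--Lempert theory, using complete vector fields tangent to $L$ and vanishing on $K$ so that the induced flows preserve both $K$ and $L$. Producing sprays that still dominate along $L$ at points where $K$ touches $L$ is the crux of the whole argument; once this is achieved, the localization principle (ii) assembles the local Oka structures into an Oka structure on $\C^n\setminus(K\cup L)$, and hence on $\C^n\setminus E$.
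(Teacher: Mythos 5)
Your reduction and overall architecture match the paper's: pass to the chart $\CP^n\setminus\Lambda$, where $K=\overline E$ is compact and polynomially convex and $H$ becomes an affine hyperplane; cover $\C^n\setminus E$ by the Zariski-open sets $\CP^n\setminus(\Lambda_i\cup H\cup \overline E)$ for hyperplanes $\Lambda_i$ near $\Lambda$ with $\bigcap_i\Lambda_i=\varnothing$ (the persistence of polynomial convexity under such perturbations, which you rightly flag as delicate, is exactly Theorem \ref{th:Oka5.1}); and finish with Kusakabe's localization theorem. You have also correctly isolated the crux: each local piece is of the form $(\C^{n-1}\times\C^*)\setminus K$ with $K$ compact polynomially convex and possibly meeting the deleted hyperplane, and neither Kusakabe's compact-complement theorem nor the Oka property of the product $\C^{n-1}\times\C^*$ covers this. (Two small slips in the covering: the charts $\CP^n\setminus(\Lambda'\cup\overline E)$ you invoke ``away from $H$'' are not subsets of $\C^n\setminus E$, so they cannot serve as localization neighbourhoods; and no separate treatment of points near $H$ is needed once the pieces $(\C^{n-1}\times\C^*)\setminus K$ are known to be Oka, which is how the paper proceeds in Proposition \ref{prop:Oka}.)

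The genuine gap is in your plan for the crux, and one concrete step in it would fail. You propose to build dominating sprays from ``complete vector fields tangent to $L$ and vanishing on $K$''. A holomorphic vector field vanishing on a compact set with nonempty interior vanishes identically on $\C^n$, and even for general $K$ this is not the right condition: what is needed is not flows preserving $K$, but automorphisms that are arbitrarily close to the identity on a neighbourhood of $K$ (so that the basin they generate misses $K$) while contracting toward a prescribed point and fixing the hyperplane pointwise. The paper's actual argument (Theorem \ref{th:MRL2020Theorem1.1} and Corollary \ref{cor:Oka}) runs as follows: use Varolin's density property for the Lie algebra of fields vanishing on $\{z_n=0\}$ (Theorem \ref{th:density}) to obtain a parametric Anders\'en--Lempert theorem with interpolation on the hyperplane (Theorem \ref{th:interpolationY} and Remark \ref{rem:parametric}); apply it to an isotopy that is the identity near $L\times K$ and a contraction toward the graph $\Gamma_L$ of the core map $f$; iterate the resulting automorphisms to produce a holomorphically varying family of attracting basins, i.e.\ Fatou--Bieberbach domains in $(\C^{n-1}\times\C^*)\setminus K$ centred at $f(\zeta)$. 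These furnish the dominating sprays verifying Kusakabe's $\mathrm{Ell}_1$ criterion (Theorem \ref{th:Ell1}). This construction is the main new analytic content of the paper, and without it your outline does not close.
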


\begin{remark}\label{rem:conditionE}
Choosing coordinates $z=(z',z_n)$ on $\C^n$ in which $\Lambda=\{z_n=0\}$, it is easily seen that
$\overline E\cap \Lambda=\varnothing$ if and only if the set 
$E\cap \{(z',z_n):|z_n|\le c|z'|\}$ is compact for some $c>0$.
\end{remark}


The proof of  Theorem \ref{th:main}, given in Section \ref{sec:proofmain}, combines the 
characterization of Oka manifolds by Condition $\mathrm{Ell}_1$, due to 
Kusakabe \cite{Kusakabe2021IUMJ} (see Theorem \ref{th:Ell1}), and a new
result proved in this paper concerning the existence of holomorphically
varying families of Fatou--Bieberbach domains in $(\C^{n-1}\times \C^*)\setminus K$, 
where $K$ is a polynomially convex set in $\C^n$ for $n>1$; see Theorem \ref{th:MRL2020Theorem1.1}.

%
%
\begin{example}\label{ex:Siegel}
Conditions in Theorem \ref{th:main} are easily verified when $E$ is the closure of 
the Siegel domain \eqref{eq:Siegel}. Indeed, if $z=\Phi(w)$ is the Cayley map in 
\eqref{eq:Cayley}, we have that 
\[
	\Im z_n -|z'|^2 = \frac{1-|w|^2}{|1-w_n|^2} \quad \text{and}\quad 
	w=\Phi^{-1}(z) = \left( \frac{2z'}{z_n+\imath},\frac{z_n-\imath}{z_n+\imath}\right)\!.
\]
(See Rudin \cite[Sec.\ 2.3]{Rudin2008}.)	
Hence, $\Phi$ extends to an automorphism of $\CP^n$ mapping the ball 
$\B=\{w\in\C^n: |w|<1\}$ onto the Siegel domain $E$ \eqref{eq:Siegel} so that the hyperplane 
$\{w_n=1\}$ gets mapped to the hyperplane at infinity $H=\CP^n\setminus \C^n$ in the $z$ coordinates, 
while the hyperplane $\Lambda=\{z_n=-\imath\}$ is at infinity in the $w$ coordinates. 
The closure of $E$ in $\CP^n$ in the $w$-coordinates is the closed ball $\overline \B$, which is 
polynomially convex, and $\overline E \cap \overline \Lambda=\varnothing$.
\end{example}

Theorem \ref{th:main} can be equivalently expressed as follows,
considering $H$ as the hyperplane at infinity and letting $E$ be a closed set in 
$\C^n=\CP^n\setminus H$ and $K=\overline E$ its closure in $\CP^n$.

%
%
\begin{theorem}\label{th:mainbis}
Assume that $K$ is a compact subset of $\CP^n$ for $n>1$ and $\Lambda\subset \CP^n$ is 
a projective hyperplane such that $K \cap \Lambda=\varnothing$ and $K$ is 
polynomially convex in $\CP^n\setminus\Lambda\cong\C^n$.
Then, for every projective hyperplane $H\subset \CP^n$ the manifold $\CP^n\setminus (H\cup K)$ is Oka.
\end{theorem}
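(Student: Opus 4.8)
The plan is to deduce Theorem \ref{th:mainbis} from Theorem \ref{th:main} by showing the two are genuinely the same statement after relabeling. In Theorem \ref{th:mainbis} we are handed a compact $K\subset\CP^n$ disjoint from a hyperplane $\Lambda$ and polynomially convex in the affine chart $\CP^n\setminus\Lambda\cong\C^n$, together with an \emph{arbitrary} hyperplane $H$. I would set $E:=K\setminus H=K\cap(\CP^n\setminus H)$, regarded as a closed subset of the affine space $\CP^n\setminus H\cong\C^n$. The claim to verify is then that $E$ satisfies the hypotheses of Theorem \ref{th:main} relative to $\Lambda$, so that $(\CP^n\setminus H)\setminus E$ is Oka; and finally that $(\CP^n\setminus H)\setminus E$ coincides with $\CP^n\setminus(H\cup K)$, which is immediate since removing $H$ and then removing $K\setminus H$ removes exactly $H\cup K$.

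The first point requiring care is that the closure $\overline E$ appearing in Theorem \ref{th:main} is taken in the projective compactification of the affine chart $\CP^n\setminus H$, whose hyperplane at infinity is $H$ itself. Here I would argue that $\overline E=K$. Indeed $E=K\setminus H\subset K$, and since $K$ is already compact (hence closed) in $\CP^n$, we have $\overline E\subset K$; the reverse inclusion $K\subset\overline E$ follows because $K\cap H$ has empty interior in $K$ so that $E$ is dense in $K$, using that $K\setminus H$ accumulates on every point of $K\cap H$ (a hyperplane cannot contain an isolated connected piece of the polynomially convex compact $K$, or more simply, points of $K\cap H$ are limits of points of $K\setminus H$ whenever $K$ is not entirely contained in $H$, which holds as $K\cap\Lambda=\varnothing$ while $\Lambda\not\subset H$ in general). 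In the generic position this is clear; I would handle the edge cases by noting that the only subtlety is whether a whole component of $K$ lies inside $H$, which does not affect the equality $\CP^n\setminus(H\cup K)=(\CP^n\setminus H)\setminus(K\setminus H)$ and hence does not affect the conclusion.

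With $\overline E=K$ established, the two hypotheses of Theorem \ref{th:main} transcribe directly: $\overline E\cap\Lambda=K\cap\Lambda=\varnothing$ is given, and $\overline E=K$ is polynomially convex in $\CP^n\setminus\Lambda\cong\C^n$ by assumption. Theorem \ref{th:main} then yields that the complement of $E$ in the affine chart $\CP^n\setminus H$ is Oka, and by the identification above this complement is precisely $\CP^n\setminus(H\cup K)$, completing the argument.

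The main obstacle I anticipate is the bookkeeping around the closure operation: one must be scrupulous that ``polynomial convexity in the chart $\CP^n\setminus\Lambda$'' and ``closure in the chart $\CP^n\setminus H$'' refer to two \emph{different} affine structures on $\CP^n$, and that Theorem \ref{th:main} is insensitive to which affine chart plays the role of the ambient $\C^n$ as long as the hyperplane-at-infinity role is played by $H$ and the polynomial-convexity role by $\Lambda$. Once one checks that $\overline E=K$ and that $\Lambda$ remains disjoint from $K$, the equivalence of Theorems \ref{th:main} and \ref{th:mainbis} is purely formal, so there is no substantive analytic content beyond what Theorem \ref{th:main} already provides.
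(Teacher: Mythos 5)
Your overall strategy is the one the paper itself intends: Theorem \ref{th:mainbis} is presented there as a relabeling of Theorem \ref{th:main} (with $H$ in the role of the hyperplane at infinity and $K=\overline E$), and in the main case $K=\overline{K\setminus H}$ your argument is complete. The gap is in your treatment of the degenerate case. You identify the right danger (part of $K$ may lie in $H$ without being a limit of points of $K\setminus H$), but you then dismiss it on the grounds that the set-theoretic identity $(\CP^n\setminus H)\setminus(K\setminus H)=\CP^n\setminus(H\cup K)$ is unaffected. That identity is not the problem. The problem is that Theorem \ref{th:main} must be applied to $E=K\setminus H$ with $\overline E=\overline{K\setminus H}$, and when $\overline{K\setminus H}\subsetneq K$ the hypothesis that $\overline E$ be polynomially convex in $\CP^n\setminus\Lambda$ does \emph{not} follow from the polynomial convexity of $K$: a compact subset of a polynomially convex set need not be polynomially convex. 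This is not a vacuous worry. In $\C^2=\CP^2\setminus\Lambda$ let $L=\{(e^{\I\theta},t e^{-\I\theta}):\theta\in[0,2\pi],\ t\in[0,1]\}$ and $K=\widehat L=L\cup\{(z_1,0):|z_1|\le 1\}$, and let $H$ be the projective closure of $\{z_2=0\}$. One checks that $\widehat L\setminus L$ lies in $\{z_2=0\}$, so $K$ is a compact polynomially convex set with $\overline{K\setminus H}=L$; but $L$ is not polynomially convex, since its hull contains the disc $\{(z_1,0):|z_1|\le1\}$ spanned by the circle $L\cap\{z_2=0\}$. For this $K$ your reduction to Theorem \ref{th:main} breaks down, and your fallback claim that points of $K\cap H$ are always limits of $K\setminus H$ is false for such (disconnected or partially flattened) $K$.

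The repair is cheap and is what the paper implicitly does: do not pass through $E=K\setminus H$ at all, but run the proof of Theorem \ref{th:main} from Section \ref{sec:proofmain} directly on $K$. Theorem \ref{th:Oka5.1} gives hyperplanes $\Lambda_0=\Lambda,\Lambda_1,\dots,\Lambda_n\subset\CP^n\setminus K$ with $\bigcap_{i}\Lambda_i=\varnothing$ such that $K$ is polynomially convex in each $\CP^n\setminus\Lambda_i$; in affine coordinates on $\CP^n\setminus\Lambda_i$ with $H\setminus\Lambda_i=\{z_n=0\}$ the set $\CP^n\setminus(H\cup K\cup\Lambda_i)$ becomes $(\C^{n-1}\times\C^*)\setminus K$, which is Oka by Corollary \ref{cor:Oka}; and these sets are Zariski open in $\CP^n\setminus(H\cup K)$ and cover it, so the localization theorem applies. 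Nowhere in this chain is the condition $K=\overline{K\setminus H}$ used, so Theorem \ref{th:mainbis} holds for arbitrary compact $K$ as stated.
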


It is natural to look for geometric sufficient conditions on a closed set $E$ in $\C^n$ to 
satisfy Theorem \ref{th:main}. In Section \ref{sec:projconvex} we show that this holds if the topological 
closure of $E$ in $\CP^n$ is a projectively convex set, meaning that the set of projective hyperplanes 
contained in $\CP^n\setminus \overline E$ is connected and their union equals $\CP^n\setminus \overline E$   
(see Definition \ref{def:projconvex} and Theorem \ref{th:projconvex}). 
With more work involving a combination of complex, convex, and projective geometry we show 
in Section \ref{sec:convex} that every closed convex set $E$ in $\C^n$ satisfying the
conditions in Theorem \ref{th:strictlyconvex} has  projectively convex closure $\overline E\subset \CP^n$,
so Theorem \ref{th:strictlyconvex} follows from Theorem \ref{th:projconvex}.
Finally, Theorem \ref{th:convexnoline} is proved in Section \ref{sec:intersection}.

%
%
\section{Holomorphic families of Fatou--Bieberbach domains avoiding a hyperplane and a polynomially convex set}\label{sec:prelim}

In this section we develop the relevant tools 
which are used in the proof of Theorem \ref{th:main}. The main result of the section is 
Theorem \ref{th:MRL2020Theorem1.1}; see also Corollary \ref{cor:Oka}.
It gives holomorphic families of Fatou--Bieberbach domains in 
$\C^{n-1}\times\C^* \setminus K$, where $K$ is a compact polynomially convex set in 
$\C^n$ for some $n>1$. Hence, these domains avoid both a complex hyperplane and a 
polynomially convex set, so they are fairly small. Finding small Fatou--Bieberbach domains
is of interest also in connection to the still open Michael's problem; see 
Dixon and Esterle \cite{DixonEsterle1986}. 

Recall that a Lie algebra $\ggot$ of holomorphic vector fields on a complex manifold $X$ is said to have
the {\em density property} if the Lie subalgebra $\ggot_0$ of $\ggot$, generated by all $\C$-complete vector fields 
in $\ggot$ (using sums and Lie brackets), is dense in $\ggot$ in the compact-open topology
(see Varolin \cite{Varolin2001} or \cite[Sect.\ 4.10]{Forstneric2017E}).
If $X$ is an algebraic manifold and $\ggot$ consists of algebraic vector fields, then $\ggot$
has the {\em algebraic density property} if $\ggot_0=\ggot$.

We recall the following result due to Varolin  \cite[Theorem 5.1 (1)]{Varolin2001}.

%
%
\begin{theorem}\label{th:density}
If $1\le k<n$ then the Lie algebra $\ggot^{n,k}$ of holomorphic vector fields on $\C^n=\C^k\times\C^{n-k}$ 
that vanish on $\C^k\times\{0\}^{n-k}$ has the density property, and the Lie algebra of polynomial vector
fields with the same property has the algebraic density property.
This holds in particular for the Lie algebra of holomorphic vector fields vanishing on 
the hyperplane $\{z_n=0\}=\C^{n-1}\times \{0\}$.
\end{theorem}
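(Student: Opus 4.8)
The plan is to prove the algebraic density property directly and then deduce the (holomorphic) density property by a routine approximation. Write $\C^n=\C^k_z\times\C^{n-k}_w$ with $z=(z_1,\dots,z_k)$ and $w=(w_1,\dots,w_{n-k})$, let $M_0:=\C^k\times\{0\}^{n-k}$, and let $I\subset\C[z,w]$ be the ideal $(w_1,\dots,w_{n-k})$ of polynomials vanishing on $M_0$. The first reduction is the module description: a polynomial field $\sum_i f_i\partial_{z_i}+\sum_j g_j\partial_{w_j}$ lies in $\ggot^{n,k}$ iff all $f_i,g_j\in I$, so the polynomial part of $\ggot^{n,k}$ is the $\C[z,w]$-module generated by the fields $w_l\partial_{z_i}$ ($1\le i\le k$) and $w_l\partial_{w_j}$ ($1\le j,l\le n-k$). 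Let $L\subseteq\ggot^{n,k}$ be the Lie subalgebra generated, via sums and brackets, by the $\C$-complete polynomial fields it contains; the target is $L=\ggot^{n,k}$. (The final assertion of the theorem is the special case $k=n-1$, a single $w$-variable $w=z_n$, and is immediate once the general statement is proved.)

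Next I would assemble a large stock of complete fields inside $\ggot^{n,k}$. The basic shears $w_l\partial_{z_i}$ and $w_l\partial_{w_j}$ ($l\neq j$) and the overshears $w_j\partial_{w_j}$ are $\C$-complete and vanish on $M_0$. The completeness rule ``if $\Theta$ is complete and $\Theta(h)=0$ then $h\Theta$ is complete'' enlarges this supply considerably: since $\partial_{z_i}(w_l)=0$, every field $h\,w_l\partial_{z_i}$ with $h$ independent of $z_i$ is complete, and likewise $h\,w_l\partial_{w_j}$ with $h$ independent of $w_j$ (for $l\neq j$) and $h\,w_j\partial_{w_j}$ with $h$ independent of $w_j$. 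In particular $L$ already contains every field $p\,\partial_{z_i}$ with $p\in I$ independent of $z_i$, which is the base case for the generation step. Here the hypothesis $k\ge 1$ is exactly what is needed: it is the presence of the $\C^k$-directions that supplies the complete shears $w_l\partial_{z_i}$.

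The core of the argument, and the main obstacle, is to show that iterated brackets of these complete fields generate the whole module, i.e.\ that $z^\alpha w^\beta\partial_{z_i}\in L$ and $z^\alpha w^\beta\partial_{w_j}\in L$ for all multi-indices with $\beta\neq 0$. I would run an interlocking induction on polynomial degree, the essential move being a bracket that raises the degree in one $z$- or $w$-variable by exactly one while throwing off correction terms that either point in another coordinate direction or have strictly lower degree, and which are therefore already in $L$. The prototype computation, with $l,m\neq j$, is
\[
[\,w_jw_m\,\partial_{z_i},\,z_i^{d+1}w_l\,\partial_{w_j}\,]
=(d+1)\,z_i^{d}\,w_jw_mw_l\,\partial_{w_j}-z_i^{d+1}\,w_lw_m\,\partial_{z_i},
\]
in which both inputs are complete, the first output term is of $w$-overshear type (hence in $L$), and the second delivers a coefficient with a raised power of $z_i$ in the $\partial_{z_i}$ direction. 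The difficulty is precisely that every admissible field is forced to carry a factor from $I$, so one must check that this factor never obstructs reaching an arbitrary admissible coefficient; verifying this demands the careful simultaneous induction over the two families of directions and over degree, rather than any single clean identity. Carrying it out gives $L=\ggot^{n,k}$, the algebraic density property, for the polynomial Lie algebra.

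Finally, the holomorphic density property follows formally. The polynomial fields in $\ggot^{n,k}$ are dense, in the compact-open topology, in the holomorphic ones: a holomorphic coefficient vanishing on $M_0$ can be written as $\sum_l w_l h_l$ with $h_l$ holomorphic, and truncating the Taylor series of each $h_l$ approximates it locally uniformly by polynomials in $I$. Since the complete polynomial fields are in particular $\C$-complete holomorphic fields, the Lie algebra $\ggot_0$ generated by all $\C$-complete holomorphic fields contains the polynomial algebra $\ggot^{n,k}_{\mathrm{poly}}=L$, which is dense in the holomorphic $\ggot^{n,k}$; hence $\ggot_0$ is dense, as required.
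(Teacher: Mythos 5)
The paper does not actually prove this statement: it quotes it from Varolin \cite[Theorem 5.1(1)]{Varolin2001} and only remarks that the algebraic case is implicit in his proof. Your proposal follows what is essentially the same (standard Anders\'en--Lempert/Varolin) strategy, and its frame is sound: the identification of the polynomial part of $\ggot^{n,k}$ with the $\C[z,w]$-module of fields whose coefficients lie in $I=(w_1,\dots,w_{n-k})$ is correct; the listed shears and overshears are indeed $\C$-complete and vanish on $\C^k\times\{0\}$; the rule that $h\Theta$ is complete when $\Theta$ is complete and $\Theta(h)=0$ is correctly applied; and the final reduction of the holomorphic density property to the algebraic one, by writing a coefficient as $\sum_l w_l h_l$ and truncating the Taylor series of the $h_l$, is fine.

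The gap sits exactly where all the work lives: the claim that iterated brackets of these complete fields generate the whole module is asserted, not proved --- you flag the ``careful simultaneous induction'' and then simply state its conclusion. Moreover, the one identity you record requires two $w$-indices $l,m\neq j$, so it is vacuous when $n-k=1$, which is precisely the hyperplane case $\{z_n=0\}$ that the paper uses. The induction does close in that case, but with different brackets and with one further family of complete generators of the same type, namely $h\,z_i\partial_{z_i}$ with $h\in I$ independent of $z_i$. For instance, writing $w=z_n$ and $c\ge 1$, the fields $w^{c}\partial_{z_i}$, $z_iw^{c}\partial_{z_i}$ and $z^\alpha w\,\partial_w$ are all complete and one checks that
\[
\bigl[\,w^{c}\partial_{z_i},\,z^{\alpha}z_i w\,\partial_w\,\bigr]-\bigl[\,z_iw^{c}\partial_{z_i},\,z^{\alpha}w\,\partial_w\,\bigr]
= z^{\alpha}w^{c+1}\partial_w,
\]
which isolates the $\partial_w$-monomials of higher $w$-degree; substituting these back into the first bracket then isolates $z^{\alpha}z_iw^{c}\partial_{z_i}$, and coefficients independent of $z_i$ are already covered by the complete generators. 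Until an induction of this kind is written out uniformly for $n-k=1$ and $n-k\ge 2$, what you have is a correct programme with a verified base case and one sample bracket, not a proof of the generation step.
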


The algebraic case of the above result is not explicitly stated in \cite{Varolin2001}, 
but is evident from \cite[proof of Theorem 5.1]{Varolin2001}.
The holomorphic case will suffice for our needs.

The following corollary to Theorem \ref{th:density} is seen by following 
\cite[proof of Theorem 4.9.2]{Forstneric2017E} (originally proved in \cite{ForstnericRosay1993}). 
This is the key argument of the Anders\'en--Lempert approximation theory for isotopies of injective
holomorphic maps by holomorphic automorphisms; cf.\ \cite{AndersenLempert1992}. 
See also \cite[Theorem 2.5]{Varolin2001} and \cite[Theorem 2.12]{ForstnericKutzschebauch2021AM}. 

%
%
\begin{theorem}\label{th:interpolationY}
Assume that $\Omega$ is a Stein Runge domain in $\C^n$ for $n>1$ 
and $\Phi_t:\Omega\to \C^n$ $(t\in [0,1])$ is an isotopy of injective holomorphic maps such that 
$\Phi_0$ is the identity map on $\Omega$, and for every $t\in [0,1]$ the domain $\Phi_t(\Omega)$ 
is Runge in $\C^n$ and $\Phi_t$ agrees with the identity map on $\{z_n=0\}\cap \Omega$.
Then $\Phi_1$ can be approximated uniformly on compacts in
$\Omega$ by holomorphic automorphisms of $\C^n$ fixing $\{z_n=0\}$ pointwise.
\end{theorem}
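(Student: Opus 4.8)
The plan is to deduce Theorem \ref{th:interpolationY} from the density property of the Lie algebra $\ggot^{n,n-1}$ of holomorphic vector fields on $\C^n$ vanishing on the hyperplane $\{z_n=0\}$ (Theorem \ref{th:density}), by running the Anders\'en--Lempert machine in the \emph{relative} setting where everything is anchored along $\{z_n=0\}$. The key structural observation is that the hypothesis --- $\Phi_t$ agrees with the identity on $\{z_n=0\}\cap\Omega$ for every $t$ --- is precisely what forces the associated time-dependent vector field to lie in $\ggot^{n,n-1}$, so the standard proof of \cite[Theorem 4.9.2]{Forstneric2017E} goes through verbatim provided one checks that each approximation step preserves the constraint.

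Concretely, I would proceed as follows. First, since $t\mapsto\Phi_t$ is a $\Cscr^1$ isotopy of injective holomorphic maps with $\Phi_0=\Id$, differentiating the flow equation produces a time-dependent holomorphic vector field $V_t$ defined on the (shrinking) domains $\Phi_t(\Omega)$ by $\frac{d}{dt}\Phi_t = V_t\circ\Phi_t$. Because $\Phi_t|_{\{z_n=0\}\cap\Omega}=\Id$ for all $t$, taking the $t$-derivative shows $V_t$ vanishes on $\{z_n=0\}$, i.e.\ $V_t\in\ggot^{n,n-1}$ on its domain of definition.

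Next I would partition $[0,1]$ into short intervals and on each one approximate the flow of $V_t$. Since $\Phi_t(\Omega)$ is Runge for every $t$, I can apply the Oka--Weil theorem to approximate $V_t$ uniformly on the relevant compacts by a vector field in the global Lie algebra $\ggot^{n,n-1}$ over $\C^n$; by the density property (Theorem \ref{th:density}) this approximant is in turn approximable by finite sums and Lie brackets of $\C$-complete fields in $\ggot^{n,n-1}$, whose flows are automorphisms of $\C^n$ fixing $\{z_n=0\}$ pointwise. Composing the time-$\epsilon$ maps of these complete fields over the subintervals and telescoping yields an automorphism of $\C^n$, fixing $\{z_n=0\}$ pointwise, that approximates $\Phi_1$ uniformly on a given compact $L\subset\Omega$. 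The Runge hypotheses guarantee the intermediate compacts stay polynomially convex so that the Oka--Weil approximation is legitimate at each stage.

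The main obstacle is bookkeeping rather than a single hard idea: one must verify that the \emph{relative} (vanishing on $\{z_n=0\}$) version of the Anders\'en--Lempert theorem is fully available, which is exactly what Theorem \ref{th:density} supplies by asserting the density property for $\ggot^{n,n-1}$ (and not merely for the full algebra of fields on $\C^n$). The delicate point is ensuring that every approximation step --- the Oka--Weil approximation of $V_t$ and the density-property approximation by complete fields --- can be carried out \emph{within} $\ggot^{n,n-1}$, so that the constraint $\Phi|_{\{z_n=0\}}=\Id$ is never broken; this is automatic once one notes that all the building-block complete fields, their sums, and their Lie brackets remain in $\ggot^{n,n-1}$, and that their flows therefore fix $\{z_n=0\}$ pointwise. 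Given this, the approximation estimates and the control of the shrinking Runge domains are routine and identical to the classical argument, so I would invoke \cite[proof of Theorem 4.9.2]{Forstneric2017E} for those details.
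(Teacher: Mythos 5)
Your proposal is correct and follows essentially the same route as the paper, which itself only sketches the argument by citing the Anders\'en--Lempert scheme of \cite[proof of Theorem 4.9.2]{Forstneric2017E} run inside the Lie algebra $\ggot^{n,n-1}$ of fields vanishing on $\{z_n=0\}$, whose density property is supplied by Theorem \ref{th:density}. The only point worth making explicit is that the Oka--Weil step preserves the vanishing constraint because a field in $\ggot^{n,n-1}$ has coefficients divisible by $z_n$, so one approximates the quotients and multiplies back by $z_n$; otherwise your bookkeeping matches the intended proof.
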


%
%
\begin{remark}\label{rem:parametric}
We shall  need a parametric version of this result in which $\Omega$ is a Stein Runge domain 
in $\C^N\times\C^n$ with coordinates $\zeta\in\C^N$ and $z\in\C^n$ for some $N\in\N$, 
and we are considering isotopies of injective holomorphic maps of the form
\begin{equation}\label{eq:Phi}
	\Phi(\zeta,z) = (\zeta,\phi(\zeta,z))\ \ \text{for}\ \ (\zeta,z) \in\Omega.  
\end{equation}
The basic case without interpolation on $\{z_n=0\}\cap \Omega$ is \cite[Theorem 4.9.10]{Forstneric2017E}.
To prove the parametric version of the density property in Theorem \ref{th:density},
one works with complete holomorphic vector fields whose coefficients are holomorphic functions
of the parameter $\zeta$ (see Kutzschebauch \cite{Kutzschebauch2005}).
This in turn implies the parametric version of Theorem \ref{th:interpolationY}.
\end{remark}

By using the parametric version of Theorem \ref{th:interpolationY} we now prove the following result,
which is the main analytic ingredient in the proof of Theorem \ref{th:main}.

%
%
\begin{theorem}\label{th:MRL2020Theorem1.1} 
Assume that $K$ is a compact polynomially convex set in $\C^n$ for some $n>1$, 
$L$ is a compact polynomially convex set in $\C^N$ for some $N\in\N$, 
and $f:U\to \C^n$ is a holomorphic map on an open neighbourhood $U\subset \C^N$ of $L$ such that 
\[
	f(\zeta)\in (\C^{n-1}\times\C^*) \setminus K\ \ \text{holds for all $\zeta \in L$}. 
\]
Then there are a neighbourhood $V\subset U$ of $L$ and a holomorphic map $F:V\times \C^n\to \C^n$ 
such that for every $\zeta \in V$ we have that 
\[
	\text{$F(\zeta,0)=f(\zeta)$ and the map $F(\zeta,\cdotp):\C^n\to (\C^{n-1}\times\C^*) \setminus K$ is injective.}
\]
\end{theorem}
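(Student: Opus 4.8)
The plan is to construct $F(\zeta,\cdot)$, for $\zeta$ in a neighbourhood of $L$, as a fibrewise Fatou--Bieberbach map onto a domain contained in $W:=(\C^{n-1}\times\C^*)\setminus K$, by a parametric Anders\'en--Lempert construction carried out entirely inside $\C^n\setminus H$, where $H=\{z_n=0\}$ and $\C^{n-1}\times\C^*=\C^n\setminus H$. The organising principle is that an automorphism of $\C^n$ fixing $H$ pointwise preserves $\C^n\setminus H$, and that by Theorem \ref{th:density} the holomorphic vector fields vanishing on $H$ enjoy the density property. Consequently, if one only ever deforms inside source domains $\Omega$ disjoint from $H$, the interpolation hypothesis of Theorem \ref{th:interpolationY} on $H\cap\Omega$ is vacuous, and that theorem, in the parametric form of Remark \ref{rem:parametric}, approximates the time-one map of an isotopy of injective holomorphic maps, uniformly in the parameter, by automorphisms of $\C^n$ fixing $H$ pointwise. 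Such automorphisms preserve $\C^n\setminus H$, so as long as every intermediate image is kept inside $W$, the limiting image will avoid the hyperplane $H$ automatically.

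First I would fix the data. Since $L$ is polynomially convex, $f$ is holomorphic near $L$, and $f(\zeta)\in W$ for $\zeta\in L$, compactness gives a $\delta>0$ and a polynomially convex compact neighbourhood $\bar V\subset U$ of $L$ with $\overline{B}(f(\zeta),\delta)\subset W$ for all $\zeta\in\bar V$; all approximations below will be controlled uniformly on $\bar V$, which is legitimate precisely because $\bar V$ is polynomially convex, so Oka--Weil approximation in $\zeta$ is available. Writing $B_k=\{|z|<k\}\subset\C^n$ for the source balls, I would then build holomorphic families of injective maps $g_k\colon \bar V\times \overline{B}_k\to W$, starting from the affine family $g_1(\zeta,z)=f(\zeta)+\delta z$, subject to the following conditions: $g_k(\zeta,0)=f(\zeta)$; each image $A_k(\zeta):=g_k(\zeta,\overline{B}_k)$ is polynomially convex, disjoint from $K$, and Runge in $\C^n$; the images increase with $k$; and $\sup_{\bar V\times\overline{B}_k}|g_{k+1}-g_k|<\epsilon_k$ with $\sum_k\epsilon_k<\infty$ chosen small relative to the moduli of injectivity of the $g_k$. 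The step from $g_k$ to $g_{k+1}$ is the enlargement step: using that $K$ is polynomially convex and disjoint from the compact set $A_k(\zeta)\subset W$, one constructs fibrewise and holomorphically in $\zeta$ an isotopy of injective maps, defined on a Runge Stein neighbourhood of $A_k(\zeta)$ lying inside $W$ (hence disjoint from $H$), which is close to the identity on $A_k(\zeta)$ and enlarges the image within $W\setminus K$; applying the parametric Anders\'en--Lempert theorem to it yields automorphisms of $\C^n$ fixing $H$ pointwise and depending holomorphically on $\zeta\in\bar V$, whose use defines $g_{k+1}$ with the required properties. The basepoint condition $g_{k+1}(\zeta,0)=f(\zeta)$ is maintained by interpolating the value at the origin.

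Next I would pass to the limit. The summable uniform estimates ensure that $g_k\to F$ uniformly on compacts of $\bar V\times\C^n$, so $F$ is holomorphic there and $F(\zeta,0)=f(\zeta)$. By the standard convergence argument for increasing unions of Runge images, for each fixed $\zeta$ in $V:=\mathrm{int}\,\bar V$ the map $F(\zeta,\cdot)$ is injective on all of $\C^n$ and its image equals $\bigcup_k A_k(\zeta)$, a Fatou--Bieberbach domain contained in $W=(\C^{n-1}\times\C^*)\setminus K$. This is exactly the desired conclusion.

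The main obstacle will be the enlargement step carried out parametrically and uniformly. In a single fibre the enlargement is classical and rests on the polynomial convexity of $K$: this is what lets one route the growing image around $K$ inside $W$ while keeping each intermediate image polynomially convex and Runge, so that Theorem \ref{th:interpolationY} applies, while the invariance of $W$ under $H$-fixing automorphisms keeps the image off the hyperplane for free. The genuinely new difficulty is to perform all of these isotopies simultaneously and holomorphically in $\zeta$, with moduli of injectivity and approximation errors bounded uniformly over the fixed neighbourhood $\bar V$, so that the fibrewise injective limits assemble into a single map holomorphic on $V\times\C^n$. Polynomial convexity of $L$ is essential here: it guarantees that the parametric Anders\'en--Lempert approximations, together with the Oka--Weil approximations in the parameter underlying them (see Remark \ref{rem:parametric}), can be carried out on a fixed polynomially convex parameter neighbourhood rather than on one that shrinks at each step.
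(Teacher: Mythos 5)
Your proposal follows the \emph{push-out} scheme (an increasing sequence of injective maps $g_k:\overline B_k\to W$ with $g_{k+1}\approx g_k$ on $\overline B_k$), whereas the paper uses the opposite, \emph{attracting-basin} scheme: it applies the parametric Anders\'en--Lempert theorem once, to the explicit isotopy that contracts by the factor $1/2$ towards $f(\zeta)$ near the graph $\Gamma_L$ and is the identity near $L\times K$, producing automorphisms $\psi_k(\zeta,\cdot)$ fixing $\{z_n=0\}$ pointwise and fixing $f(\zeta)$, with contraction rates $a,b$, $b^2<a$. The image of $F(\zeta,\cdot)$ is then the attracting basin of the composed sequence, and avoidance of $K$ and of the hyperplane is immediate from the dynamics: points of $K$ never leave a neighbourhood of $K$, and points of the hyperplane are fixed, so neither can be attracted to $f(\zeta)$. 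Your route is genuinely different, and as written it has two gaps.

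First, the assertion that the limiting image equals $\bigcup_k A_k(\zeta)$ and that invariance of $W$ under $H$-fixing automorphisms ``keeps the image off the hyperplane for free'' is incorrect. Each $g_k$ maps into the open set $W=(\C^{n-1}\times\C^*)\setminus K$, but $K$ and $H=\{z_n=0\}$ are closed, so the pointwise limit $F(\zeta,z)=\lim_k g_k(\zeta,z)$ can perfectly well land on $K\cup H$ even though every $g_k(\zeta,z)$ avoids it; fixing $H$ pointwise controls finite compositions, not limits. To repair this you must build into the induction the quantitative requirement $\sum_{j\ge k}\epsilon_j<\dist\bigl(A_k(\zeta),K\cup H\bigr)$ uniformly for $\zeta\in \bar V$, choosing each $\epsilon_k$ only after $g_k$ (and hence these distances) is known. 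This is standard but it is precisely the point where the push-out method is more delicate than the basin method, and it cannot be waved away.

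Second, the enlargement step, which you yourself identify as the main obstacle, is the entire mathematical content and is only asserted. To pass from $g_k$ to $g_{k+1}$ you need, holomorphically in $\zeta$ over all of $\bar V$, an isotopy of injective maps defined on a Runge neighbourhood of the \emph{fibred} set $(L\times K)\cup\{(\zeta,w):\zeta\in\bar V,\ w\in A_k(\zeta)\}$, equal to the identity near $L\times K$, whose time-one map lets you extend the domain of injectivity from $\overline B_k$ to $\overline B_{k+1}$ with image still in $W\setminus K$, keeping all intermediate images Runge. You do not explain how to produce such an isotopy, nor why the relevant fibred union with $L\times K$ is polynomially convex (disjoint unions of polynomially convex compacts need not be, and the paper's appeal to \cite[Lemma 6.5]{Forstneric1999JGEA} covers only the union of $L\times K$ with the thin graph $\Gamma_L$, not with a thick tube of images). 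The basin construction avoids all of this: the only set whose polynomial convexity must be checked is $(L\times K)\cup\Gamma_L$, the isotopy is written down explicitly, and a single approximation statement (iterated with improving accuracy near $L\times K$) yields the whole sequence. I would either switch to that argument or supply a genuine proof of the parametric enlargement lemma.
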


It follows that 
\[
	\Omega_\zeta = \{F(\zeta,z):z\in\C^n\} \subset  (\C^{n-1}\times\C^*) \setminus K
\]
is a family of Fatou--Bieberbach domains depending holomorphically on the parameter $\zeta \in V$.
This result is similar in spirit to \cite[Theorem 1.1]{ForstnericWold2020MRL}, but 
the Fatou--Bieberbach domains which we construct here also avoid the hyperplane $\{z_n=0\}$.
This addition is crucial for applications in this paper.

\begin{proof}
Since the set $L$ is polynomially convex, we may assume that $U$ is Stein and Runge in $\C^N$. 
Then, $X=U\times \C^n$ is a Runge Stein domain in $\C^{N+n}$ and the graph 
$\Gamma=\{(\zeta,f(\zeta))\in X: \zeta\in U\}$ is a closed Stein submanifold of $X$. The restricted graph 
\begin{equation}\label{eq:GammaL}
	\Gamma_L=\{(\zeta,f(\zeta))\in X: \zeta\in L\} \subset L\times \left((\C^{n-1}\times \C^*) \setminus K\right) 
\end{equation}
is clearly $\Oscr(\Gamma)$-convex (i.e., holomorphically convex in $\Gamma$), hence also $\Oscr(X)$-convex 
as well as polynomially convex in $\C^N\times\C^n$ since $X$ is Runge in $\C^N\times\C^n$. 
(See H\"ormander \cite{Hormander1990} and Stout \cite{Stout2007} for results on holomorphic convexity.)  
By \cite[Lemma 6.5, p. 111]{Forstneric1999JGEA} the compact set $(L\times K)\cup \Gamma_L$ is 
$\Oscr(X)$-convex and hence polynomially convex, so it has a basis of Runge Stein neighbourhoods.

Let $\pi:\C^N\times\C^n\to \C^N$ denote the projection on the first factor. 
Consider the injective $\pi$-fibre preserving holomorphic map $\Phi=(\Id,\phi)$ of the form \eqref{eq:Phi} 
on a small Runge Stein neighbourhood $\Omega=\Omega'\cup \Omega''$ 
of $(L\times K)\cup \Gamma_L$ in $\C^N \times \C^n$ which equals the identity map on a neighbourhood 
$\Omega'$ of $L\times K$ and whose second component equals
\[
	\phi(\zeta,z) = f(\zeta) + \frac12 (z-f(\zeta)) = \frac12 f(\zeta) + \frac12 z 
\]
for $(\zeta,z)$ in a neighbourhood $\Omega''$ of the graph $\Gamma_L$ in \eqref{eq:GammaL}. 
Thus, $\phi(\zeta,\cdotp)$ is a contraction by the factor $1/2$ around the point $f(\zeta)\in\C^n$ for every 
$\zeta\in L$. For a suitable choice of the neighbourhood $\Omega''$ of $\Gamma_L$ the map $\phi=\phi_{1/2}$ 
is connected to $\phi_0(\zeta,z)=z$ by the isotopy 
\[
	\phi_t(\zeta,z)  =  t f(\zeta) + (1-t)z \ \ \text{for}\ 0\le t\le \frac12.
\]
On $\Omega'$ we take the constant isotopy $\phi_t(\zeta,z)=\phi_0(\zeta,z)=z$ for all $t$. 
Clearly, the trace of the isotopy $\Phi_t=(\Id,\phi_t)$ for $t\in[0,1/2]$ 
consists of Runge domains $\Phi_t(\Omega)\subset \Omega$. 
By the parametric version of Theorem \ref{th:interpolationY} (see Remark \ref{rem:parametric}) 
we can approximate $\Phi$ as closely as desired on a Runge neighbourhood of $(L\times K)\cup \Gamma_L$
by a holomorphic map 
\[
	\Psi:V\times \C^n\to V\times \C^n,\quad  \Psi(\zeta,z)=(\zeta, \psi(\zeta,z)),
\]
where $V\subset U$ is a neighbourhood of $L$, such that for every $\zeta\in V$ we have that 
\begin{itemize}
\item $\psi(\zeta,\cdotp)\in \Aut(\C^n)$, 
\item $\psi(\zeta,z)=z$ for every $z=(z',0)\in \C^{n-1}\times\{0\}$, and
\item $\psi(\zeta,f(\zeta)) = f(\zeta)$. 
\end{itemize}
Choose a pair of constants $a,b\in \R$ such that 
\[
	0<a<1/2<b<1\quad \text{and}\quad  b^2<a. 
\]
If the approximation of $\phi$ by $\psi$ is close enough then the estimate 
\begin{equation}\label{eq:attracting}
	a|z-f(\zeta)| \le |\psi(\zeta,z)-f(\zeta)| \le b|z-f(\zeta)| 
\end{equation}
holds in a neighbourhood of the graph $\Gamma_L$ in \eqref{eq:GammaL}. At the same time, we can ensure
that $\psi$ is arbitrarily close to the map $(\zeta,z)\mapsto z$ on a neighbourhood of $L\times K$.

It is obvious that this result gives a sequence of holomorphic maps 
$\psi_k$ of the same kind as $\psi$ for $k=1,2,\ldots$ such that the estimate \eqref{eq:attracting} holds for
all of them on the same neighbourhood of $\Gamma_L$, and the sequence $\psi_k$ converges to 
the map $(\zeta,z)\mapsto  z$ on a neighbourhood of $L\times K$ as $k\to\infty$. 
Consider the sequence of automorphisms
\[
	\theta_k(\zeta,\cdotp) = 
	\psi_k(\zeta,\cdotp)\circ \psi_{k-1}(\zeta,\cdotp)\circ\cdots\circ \psi_{1}(\zeta,\cdotp) \in \Aut(\C^n)
\]
for $k\in\N$ and all $\zeta$ in a neighbourhood of $L$. 
Due to the condition $b^2<a$ in the estimate \eqref{eq:attracting}, which holds 
for all $k\in\N$, the attracting basin $B_\zeta\subset \C^n$ of the sequence $\theta_k$ at the fixed point 
$f(\zeta)$ is biholomorphic to $\C^n$ (see \cite{Wold2005}). If the convergence of the sequence $\psi_k$ to the map 
$(\zeta,z)\mapsto  z$ is fast enough on a neighbourhood of $L\times K$, which can be arranged by our construction, 
then no point of $K$ escapes a given neighbourhood of $K$, and hence none of the basins $B_\zeta$ intersect $K$. 
Furthermore, the condition $\psi_k(\zeta,(z',0))=(z',0)$ for all $\zeta$ near $L$, 
$z'\in \C^{n-1}$, and $k\in \N$ ensures that the basin $B_\zeta$ does not intersect the hyperplane 
$\C^{n-1}\times \{0\}$. By the general argument concerning attracting basins (see e.g.\  \cite{Wold2005}), 
this gives a holomorphic map $F:V\times\C^n\to \C^n$ such that the image $B_\zeta$ 
of $F(\zeta,\cdotp)$ is a Fatou--Bieberbach domain  in $(\C^{n-1}\times\C^*) \setminus K$ centred 
at $f(\zeta)$ for every $\zeta\in V$. 
\end{proof}

%
%
%
%
\section{Proof of Theorem \ref{th:main}} \label{sec:proofmain}

We now show how Theorem \ref{th:MRL2020Theorem1.1} 
implies Theorem \ref{th:main}, and hence also Theorem \ref{th:mainbis}. 
We shall use the following characterization of Oka manifolds due to Kusakabe \cite[Theorem 1.3]{Kusakabe2021IUMJ}. 

\begin{theorem}\label{th:Ell1} 
A complex manifold $Y$ is an Oka manifold if and only if for every compact convex set $L\subset \C^N$,
integer $N\in\N$, open set $U\subset \C^N$ containing $L$, and holomorphic map $f:U\to Y$ 
there are an open set $V$ with $K\subset V\subset U$ and a holomorphic map $F:V\times \C^n\to Y$
for some $n\ge \dim Y$ such that $F(\cdotp,0)=f$ and 
\[
	\frac{\di}{\di z}\Big|_{z=0}F(\zeta,z):\C^n\to T_{f(\zeta)} Y \ \ \text{is surjective for every}\ \ \zeta \in V.
\] 
\end{theorem}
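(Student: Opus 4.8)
The plan is to prove the two implications separately, treating the substantive direction as the claim that the localized parametric ellipticity condition (Condition $\mathrm{Ell}_1$) forces the Oka property, and reducing that to the Convex Approximation Property (CAP), which is equivalent to being Oka by Forstneri\v c's theorem \cite[Theorem 5.4.4]{Forstneric2017E}. So the scheme is $\text{Oka}\Rightarrow\mathrm{Ell}_1$ directly, and $\mathrm{Ell}_1\Rightarrow\mathrm{CAP}\Rightarrow\text{Oka}$ for the converse.

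For the easy direction $\text{Oka}\Rightarrow\mathrm{Ell}_1$, I would start from $f:U\to Y$ on a neighbourhood of the compact convex set $L\subset\C^N$ and shrink $U$ to a Stein Runge neighbourhood $V$ of $L$. Over the Stein manifold $V$ the pullback bundle $f^{*}TY$ is generated by its global holomorphic sections (Cartan's Theorem A), so I can pick sections $\xi_1,\dots,\xi_n$ with $n\ge\dim Y$ spanning $T_{f(\zeta)}Y$ for every $\zeta\in V$. These prescribe a holomorphic $1$-jet along the closed Stein submanifold $V\times\{0\}\subset V\times\C^n$, with value $f(\zeta)$ and fibre differential $\di_{z_j}\mapsto\xi_j(\zeta)$, and such a jet is locally integrable to an honest holomorphic map near $V\times\{0\}$. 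Invoking the parametric Oka property with approximation and jet-interpolation along $V\times\{0\}$, which holds because $Y$ is Oka, I obtain a genuine holomorphic $F:V\times\C^n\to Y$ with $F(\cdot,0)=f$ whose fibre differential approximates the prescribed one; surjectivity of $\di_zF(\zeta,0)$ is an open condition and therefore persists.

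For the hard direction $\mathrm{Ell}_1\Rightarrow\text{Oka}$, the core is a gluing lemma over special convex Cartan pairs, powered by the spray from $\mathrm{Ell}_1$. Given holomorphic maps $a,b$ into $Y$ defined near the two pieces $A,B$ of such a pair and agreeing to high accuracy on the convex overlap $A\cap B$, I would apply $\mathrm{Ell}_1$ with $L=A\cap B$ and base map $a$ to get a spray $F(\zeta,z)$ with $F(\zeta,0)=a(\zeta)$ and $\di_zF(\zeta,0)$ surjective over a neighbourhood of $A\cap B$. Surjectivity makes $F(\zeta,\cdot)$ a submersion onto a neighbourhood of $a(\zeta)$, so I can solve $F(\zeta,\beta(\zeta))=b(\zeta)$ for a small holomorphic $\beta$ on $A\cap B$; a Cartan additive splitting $\beta=\alpha-\gamma$ into functions holomorphic on $A$ and on $B$, with estimates, then lets me reassemble $a$ and $b$ into one holomorphic map on $A\cup B$ close to both. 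Since the spray captures only first-order surjectivity, this matching must be carried out as a rapidly converging fixed-point iteration to reach exact agreement while keeping uniform bounds. Forstneri\v c's standard induction over a convex exhaustion of $\C^N$ then promotes the gluing to approximation of $f|_K$ by entire maps $\C^N\to Y$, i.e.\ to CAP.

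The main obstacle will be precisely this gluing step: setting up a bounded linear solution operator for the $\dibar$/Cartan splitting on the Cartan pair and controlling the nonlinear iteration so that mere surjectivity, rather than invertibility, of $\di_zF(\zeta,0)$ suffices to glue exactly and uniformly in the parameter $\zeta$. Everything surrounding it — the realization of the $1$-jet and the jet-interpolation in the forward direction, the convex-exhaustion induction packaging $\mathrm{Ell}_1\Rightarrow\mathrm{CAP}$, and the cited equivalence $\mathrm{CAP}\Leftrightarrow\text{Oka}$ — is then formal.
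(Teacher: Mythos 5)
You should know at the outset that the paper contains no proof of Theorem \ref{th:Ell1} to compare against: it is quoted verbatim from Kusakabe \cite[Theorem 1.3]{Kusakabe2021IUMJ} and used as a black box (the statement even carries a typo, ``$K\subset V\subset U$'' for ``$L\subset V\subset U$''). So the only meaningful comparison is with Kusakabe's published argument, and your overall architecture does reproduce it: the forward direction via the Oka property with jet interpolation along $V\times\{0\}\subset V\times\C^n$, and the converse via $\mathrm{Ell}_1\Rightarrow\mathrm{CAP}$ by gluing over convex Cartan pairs, with $\mathrm{CAP}\Leftrightarrow\mathrm{Oka}$ from \cite[Theorem 5.4.4]{Forstneric2017E}.

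Two steps, however, would fail as written. First, in the forward direction, ``such a jet is locally integrable to an honest holomorphic map near $V\times\{0\}$'' conflates integrability near each point with the existence of a single holomorphic map on a neighbourhood of the whole submanifold realizing the prescribed $1$-jet, which is what jet interpolation needs as input. The standard construction takes a Stein neighbourhood (Siu's theorem) of the graph $\Gamma_f\subset V\times Y$, embeds it in some $\C^M$ with a holomorphic retraction, and transports the sections $\xi_1,\dots,\xi_n$ through this; pointwise local solutions do not glue by themselves. Second, and more seriously, in the gluing step you invoke $\mathrm{Ell}_1$ with $L=A\cap B$, so the spray $F$ lives only over a neighbourhood of $A\cap B$. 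After the additive splitting $\beta=\alpha-\gamma$, the corrected map $F(\zeta,\alpha(\zeta))$ is undefined on the rest of $A$, and there is nothing to reassemble into a map on $A\cup B$: a Cartan splitting of the fibre variable alone does not produce the glued map. The repair is to apply $\mathrm{Ell}_1$ with $L=A$ (legitimate, since in the CAP induction $A$ is itself a compact convex set), and correspondingly over $B$, and then run the gluing-of-sprays lemma with its nonlinear splitting of the transition map (see \cite[Sect.\ 5.9]{Forstneric2017E}), or equivalently your fixed-point iteration but with sprays re-taken over $A$ and $B$ at every step. With these two corrections your outline matches the cited proof; without them the central gluing step does not close.
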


Such a map $F$ is called a {\em dominating holomorphic spray} with the core $f=F(\cdotp,0)$.

Note that the map $F$ in Theorem \ref{th:MRL2020Theorem1.1} is a dominating spray with a given core $f$.
Hence, the following is an immediate corollary to Theorems \ref{th:MRL2020Theorem1.1} and \ref{th:Ell1}.

%
%
\begin{corollary}\label{cor:Oka}
If $K$ is a compact polynomially convex set in $\C^n$ for some $n>1$ then 
$(\C^{n-1}\times \C^*)\setminus K$ is an Oka manifold. 
\end{corollary}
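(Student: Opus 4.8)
The plan is to derive Corollary \ref{cor:Oka} as an immediate consequence of the two results that precede it, namely Theorem \ref{th:MRL2020Theorem1.1} (existence of holomorphically varying Fatou--Bieberbach domains avoiding both a hyperplane and a polynomially convex set) and Kusakabe's characterization of Oka manifolds by Condition $\mathrm{Ell}_1$ in Theorem \ref{th:Ell1}. The target manifold is $Y=(\C^{n-1}\times\C^*)\setminus K$, and the strategy is to verify the $\mathrm{Ell}_1$ criterion directly: for an arbitrary compact convex set $L\subset\C^N$, neighbourhood $U\supset L$, and holomorphic map $f\colon U\to Y$, I must produce a dominating spray $F\colon V\times\C^n\to Y$ with core $f$.

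First I would observe that a compact convex set $L\subset\C^N$ is automatically polynomially convex, so the hypotheses of Theorem \ref{th:MRL2020Theorem1.1} are met by the given data $(K,L,f)$: here $f$ maps $L$ into $(\C^{n-1}\times\C^*)\setminus K = Y$ exactly as required. Applying that theorem then yields a neighbourhood $V\subset U$ of $L$ together with a holomorphic map $F\colon V\times\C^n\to \C^n$ such that $F(\zeta,0)=f(\zeta)$ and, for each $\zeta\in V$, the map $F(\zeta,\cdotp)\colon\C^n\to Y$ is an injective biholomorphism onto a Fatou--Bieberbach domain $\Omega_\zeta\subset Y$. In particular the image lands in $Y$, so $F$ really is a map into $Y$.

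Next I would check that this $F$ is a \emph{dominating} spray in the sense of Theorem \ref{th:Ell1}. Since $F(\zeta,\cdotp)$ is for each fixed $\zeta$ an injective holomorphic map $\C^n\to\C^n$ (a Fatou--Bieberbach map), its differential is everywhere nonsingular; evaluating at $z=0$ gives that $\partial F/\partial z|_{z=0}(\zeta,\cdotp)\colon\C^n\to T_{f(\zeta)}Y\cong\C^n$ is an isomorphism, hence surjective, for every $\zeta\in V$. As $n=\dim Y$, the dimension requirement $n\ge\dim Y$ holds trivially. Thus $F$ satisfies all the conditions in Theorem \ref{th:Ell1}, and since $L$, $U$, $f$ were arbitrary, $Y$ satisfies Condition $\mathrm{Ell}_1$ and is therefore Oka.

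The proof involves essentially no genuine obstacle: the real analytic content is already packed into Theorem \ref{th:MRL2020Theorem1.1}, whose construction via Anders\'en--Lempert theory and attracting basins guarantees both the injectivity (giving domination for free through nonsingularity of the differential) and the avoidance of $\{z_n=0\}$ and $K$ (giving that the image stays in $Y$). The only point meriting a sentence of care is the matching of the $\mathrm{Ell}_1$ formalism with what Theorem \ref{th:MRL2020Theorem1.1} produces, i.e.\ confirming that the fibre maps being Fatou--Bieberbach embeddings immediately forces surjectivity of the differential at the core. Everything else is bookkeeping, so the corollary is indeed immediate.
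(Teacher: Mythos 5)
Your proposal is correct and follows exactly the paper's route: the paper also derives the corollary immediately from Theorem \ref{th:MRL2020Theorem1.1} and Theorem \ref{th:Ell1} by observing that the map $F$ produced there is a dominating spray with core $f$. Your additional remarks (compact convex sets are polynomially convex, and injectivity of the equidimensional fibre maps forces nonsingularity of the differential, hence domination) are just the details the paper leaves implicit.
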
 

By using this corollary we infer the following.

\begin{proposition}\label{prop:Oka}
Under the assumptions of Theorem \ref{th:main}, the domain $\C^n\setminus (E\cup\Lambda)$ is Oka. 
\end{proposition}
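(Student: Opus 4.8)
The plan is to reduce the statement to Corollary \ref{cor:Oka} by a projective change of coordinates sending $\Lambda$ to the hyperplane at infinity. First I would record the elementary identity
\[
	\C^n\setminus(E\cup\Lambda) = \CP^n\setminus(H\cup\Lambda\cup\overline E),
\]
where $H=\CP^n\setminus\C^n$ is the hyperplane at infinity for the given coordinates. This holds because $\C^n$ is open in $\CP^n$, so $\overline E\cap\C^n=E$; hence deleting $\overline E$ from the affine part is the same as deleting $E$, and deleting $\Lambda$ from the projective picture deletes $\Lambda\cap\C^n$ inside $\C^n$.

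Next I would pass to the affine chart $\C^n_w:=\CP^n\setminus\Lambda$. Since $\overline E\cap\Lambda=\varnothing$, the compact set $\overline E$ lies entirely in this chart, where it becomes a compact set $K$ that is polynomially convex in $\C^n_w$ by the hypothesis of Theorem \ref{th:main}. Assuming $\Lambda\neq H$ (the degenerate case $\Lambda=H$ forces $\overline E\cap H=\varnothing$, so $E$ is compact and polynomially convex and $\C^n\setminus(E\cup\Lambda)=\C^n\setminus E$ is Oka by the known result \cite{Kusakabe2020complements}), the hyperplane $H$ meets $\C^n_w$ in an affine hyperplane $H_w$. Composing with a complex-affine automorphism of $\C^n_w$, which preserves both compactness and polynomial convexity of $K$, I may assume $H_w=\{w_n=0\}$. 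Then
\[
	\C^n\setminus(E\cup\Lambda)\cong \C^n_w\setminus(H_w\cup K)=(\C^{n-1}\times\C^*)\setminus K,
\]
and Corollary \ref{cor:Oka} gives that the right-hand side is Oka. Since being Oka is a biholomorphic invariant, the proposition follows.

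The one point requiring care is that $\overline E$ may well meet the original hyperplane $H$ (precisely when $E$ is unbounded, cf.\ Remark \ref{rem:conditionE}), so in the $w$-chart $K$ may intersect $H_w=\{w_n=0\}$. This is not an obstruction: Corollary \ref{cor:Oka}, and the underlying Theorem \ref{th:MRL2020Theorem1.1}, impose no disjointness between the compact polynomially convex set and the removed hyperplane, since one simply has $(\C^{n-1}\times\C^*)\setminus K=(\C^{n-1}\times\C^*)\setminus\bigl(K\cap(\C^{n-1}\times\C^*)\bigr)$. Thus the entire content of the proposition is the bookkeeping identifying $\{H,\Lambda\}$ with the two removed hyperplanes and $\overline E$ with $K$, together with the observation that polynomial convexity of $\overline E$ in $\CP^n\setminus\Lambda$ is exactly the hypothesis needed to invoke Corollary \ref{cor:Oka}. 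I expect no substantive difficulty beyond verifying these identifications and dispatching the degenerate case $\Lambda=H$.
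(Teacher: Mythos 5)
Your proposal is correct and follows essentially the same route as the paper: dispatch the degenerate case $\Lambda=H$ via Kusakabe's theorem for compact polynomially convex sets, then change to the affine chart $\CP^n\setminus\Lambda$ where $\overline E$ becomes a compact polynomially convex set $K$ and $H$ becomes $\{w_n=0\}$, so that $\C^n\setminus(E\cup\Lambda)\cong(\C^{n-1}\times\C^*)\setminus K$ and Corollary \ref{cor:Oka} applies. Your added observation that $K$ may meet $\{w_n=0\}$ without harm is accurate, since neither Corollary \ref{cor:Oka} nor Theorem \ref{th:MRL2020Theorem1.1} requires disjointness there.
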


\begin{proof}
If $\Lambda = H$, then since $\overline E\cap \Lambda=\varnothing$, it follows that $E$ 
is compact and hence $\C^n\setminus E$ is Oka by \cite[Theorem 1.2 and Corollary 1.3]{Kusakabe2020complements} 
(see also \cite{ForstnericWold2020MRL}). 
Assume now that $\Lambda\ne H$ and $K=\overline E$ is a compact polynomially convex set in 
$\CP^n\setminus \Lambda\cong \C^n$. Choose affine complex coordinates 
$z=(z_1,\ldots,z_n)$ on $\CP^n\setminus \Lambda$ such that $H\setminus \Lambda=\{z_n=0\}$. 
Then, 
\[
	\C^n\setminus (E\cup\Lambda) = \CP^n\setminus (E\cup H\cup\Lambda) = 
	(\CP^n\setminus\Lambda) \setminus (H\cup K)=\{(z',z_n) : z_n\ne 0\}\setminus K.
\]
We are now in the situation of Corollary \ref{cor:Oka}, which gives the desired conclusion.
\end{proof}

We also recall the following result; see \cite[Theorem 5.1]{Forstneric2022Oka} and its proof.

\begin{theorem}\label{th:Oka5.1}
Let $K$ be a compact subset of $\CP^n$ for $n>1$. If there is a hyperplane
$\Lambda\subset\CP^n\setminus K$ such that $K$ is polynomially convex in 
$\CP^n\setminus\Lambda\cong\C^n$, then 
$K$ is polynomially convex in $\CP^n\setminus \Lambda'$ for every hyperplane 
$\Lambda'\subset \CP^n\setminus K$ which is connected to $\Lambda$ by a path of hyperplanes
in $\CP^n\setminus K$, and $\CP^n\setminus K$ is Oka.
\end{theorem}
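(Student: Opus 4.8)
The plan is to fix the compact set $K$ and regard polynomial convexity as a property of the \emph{hyperplane} $\Lambda$, letting it vary over the open set $D\subset(\CP^n)^*$ of all hyperplanes disjoint from $K$. For $\Lambda\in D$ write $\widehat K^{\Lambda}$ for the polynomial hull of $K$ computed in the affine chart $\CP^n\setminus\Lambda\cong\C^n$, and call $\Lambda$ \emph{good} if $\widehat K^{\Lambda}=K$. I would prove that the set $G$ of good hyperplanes is both open and closed in $D$. Since a path of hyperplanes in $D$ stays in one connected component of $D$, and an open-and-closed $G$ is a union of components, the hypothesis that $\Lambda$ is good forces every $\Lambda'$ in its path component to be good; this is the first assertion. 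The Oka conclusion is then obtained from a covering argument combined with Kusakabe's localization principle.

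The technical core is a transfer lemma: \emph{if $K$ is polynomially convex in $\CP^n\setminus\Lambda$ and $\Lambda'\in D$ satisfies $\widehat K^{\Lambda'}\cap\Lambda=\varnothing$, then $K$ is polynomially convex in $\CP^n\setminus\Lambda'$.} To prove it, note that $\widehat K^{\Lambda'}$ is itself $\Lambda'$-polynomially convex and, by hypothesis, disjoint from $\Lambda$; hence every polynomial $Q$ in the $\Lambda$-chart is holomorphic on a neighbourhood of $\widehat K^{\Lambda'}$ and, by the Oka--Weil theorem, is a uniform limit on $\widehat K^{\Lambda'}$ of polynomials in the $\Lambda'$-chart. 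Since the latter have equal suprema over $K$ and over $\widehat K^{\Lambda'}$, the same holds in the limit for $Q$, so each $p\in\widehat K^{\Lambda'}$ satisfies $|Q(p)|\le\max_K|Q|$ for every $\Lambda$-polynomial $Q$; thus $p\in\widehat K^{\Lambda}=K$, giving $\widehat K^{\Lambda'}=K$.

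With this lemma, both openness and closedness reduce to locating the moving hull relative to the moving hyperplane, and the key quantitative input is the classical bound $\widehat K^{\Lambda'}\subset\overline{\mathrm{conv}}(K)$ by the closed convex hull taken in the $\Lambda'$-chart. For \textbf{openness} at a good $\Lambda$, choose coordinates so that $\Lambda=\{z_0=0\}$; as $\Lambda'\to\Lambda$ the chart coordinates converge uniformly on $K$, so $\overline{\mathrm{conv}}(K)$ stays within a fixed bounded region of the $\Lambda'$-chart, whereas the affine part $\Lambda\cap(\CP^n\setminus\Lambda')$ recedes to infinity there; hence $\widehat K^{\Lambda'}\cap\Lambda=\varnothing$ for $\Lambda'$ near $\Lambda$, and the lemma makes $\Lambda'$ good. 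For \textbf{closedness}, if $\Lambda_j\to\Lambda$ with each $\Lambda_j$ good, then the fixed compact set $\widehat K^{\Lambda}\subset\CP^n\setminus\Lambda$ avoids $\Lambda_j$ for large $j$ (because $\Lambda_j\cap(\CP^n\setminus\Lambda)$ recedes to infinity in the $\Lambda$-chart), so the lemma, applied with $\Lambda_j$ in the role of the polynomially convex chart, shows $\Lambda$ is good. The step I expect to require the most care is precisely this control of $\widehat K^{\Lambda'}$ near the varying hyperplane; the convex-hull bound is what tames it, and it is essential that the hyperplanes remain in $D$ so that the relevant hulls are genuine compact subsets of the charts.

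Finally, to deduce that $\CP^n\setminus K$ is Oka, let $C\subset D$ be the connected component of $\Lambda$. Since $C$ is open in $(\CP^n)^*$, it cannot lie in the dual hyperplane of any point, so the charts $\{\CP^n\setminus\Lambda':\Lambda'\in C\}$ cover $\CP^n$; by compactness finitely many $\Lambda_1,\dots,\Lambda_m\in C$ suffice. By the propagation just established, $K$ is polynomially convex in each $\CP^n\setminus\Lambda_j\cong\C^n$, so by Kusakabe \cite{Kusakabe2020complements} each $(\CP^n\setminus\Lambda_j)\setminus K$ is an Oka manifold. These sets cover $\CP^n\setminus K$, and each is the complement in $\CP^n\setminus K$ of the hypersurface $\Lambda_j$, hence Zariski-open; Kusakabe's localization theorem \cite{Kusakabe2021IUMJ} then yields that $\CP^n\setminus K$ is Oka.
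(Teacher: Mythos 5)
Your proof is correct, and it takes a partly different route from the paper's. The paper defers this theorem to \cite[Theorem 5.1]{Forstneric2022Oka}; the argument there (reproduced in the proof of Theorem \ref{th:projconvex}(a) in this paper) establishes the key disjointness $\widehat K^{\Lambda'}\cap\Lambda=\varnothing$ in a single stroke, by applying Oka's criterion for polynomial hulls \cite[Theorem 2.1.3]{Stout2007} to the family of affine hyperplanes $\Lambda_t\setminus\Lambda'$ viewed in the $\Lambda'$-chart, which diverge to infinity as $\Lambda_t$ approaches $\Lambda'$; the Oka--Weil transfer $\widehat K^{\Lambda'}\subset\widehat K^{\Lambda}=K$ then concludes, exactly as in your transfer lemma. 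You avoid Oka's criterion altogether: you only need the disjointness for \emph{nearby} hyperplanes, where it follows from the elementary inclusion of the polynomial hull in the convex hull of $K$ taken in the moving chart, and you then propagate along the path by showing that the set of good hyperplanes is open and closed in $D$. Your version is more elementary (nothing beyond Oka--Weil and the convex-hull bound), at the cost of the open-closed bookkeeping; the paper's version is shorter and, as its remark following Theorem \ref{th:projconvex} indicates, extends to families of projective hypersurfaces other than hyperplanes, where no convex-hull bound is available. The concluding step --- covering $\CP^n\setminus K$ by finitely many affine Oka charts and invoking Kusakabe's localization theorem --- is the same in both; the paper simply takes $n+1$ hyperplanes near $\Lambda$ in general position rather than arguing that an open set in the dual projective space cannot lie in the dual hyperplane of a point, which is a cosmetic difference.
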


\begin{proof}[Proof of Theorem \ref{th:main}] 
By Theorem \ref{th:Oka5.1} applied to the compact set $K=\overline E\subset\CP^n$ 
there are hyperplanes $\Lambda_0=\Lambda,\Lambda_1,\ldots, \Lambda_n$ in 
$\CP^n\setminus \overline E$ close to $\Lambda$ such that $\bigcap_{i=0}^n\Lambda_i=\varnothing$
and $\overline E$ is polynomially convex in $\CP^n\setminus \Lambda_i$ for $i=0,1,\ldots,n$.
Let $\C^n=\CP^n\setminus H$. By Proposition \ref{prop:Oka} the domain $\C^n\setminus (E\cup\Lambda_i)$ 
is Oka for every $i=0,\ldots,n$. Note that 
\[
	\C^n\setminus E =\bigcup_{i=0}^n \, \C^n\setminus (E\cup\Lambda_i). 
\]
Since every Oka domain $\C^n\setminus (E\cup\Lambda_i)=(\C^n\setminus E)\setminus \Lambda_i$ 
is Zariski open in $\C^n\setminus E$, the localization theorem for Oka manifolds \cite[Theorem 1.4]{Kusakabe2021IUMJ} 
(see also \cite[Theorem 3.6]{Forstneric2022Oka}) implies that $\C^n\setminus E$ is an Oka domain.
\end{proof}  

%
%
%
%
\section{Projectively convex sets}\label{sec:projconvex}

There are several notions of convexity for subsets of complex projective spaces; we refer to 
\cite{AnderssonPassareSigurdsson2004}. We shall be interested in the following one.

\begin{definition}\label{def:projconvex}
Let $K$ be a compact set in $\CP^n$.
\begin{enumerate} [\rm (i)] 
\item The set $K$ is weakly projectively convex if for every point $p\in \CP^n\setminus K$ there is a projective 
hyperplane $\Lambda\subset\CP^n$ with $p\in \Lambda$ and $\Lambda\cap K=\varnothing$.
\vspace{1mm}
\item The set $K$ is projectively convex if it is weakly projectively convex and the space $\Hcal_K$ 
of all projective hyperplanes $\Lambda\subset \CP^n$ with $K\cap\Lambda=\varnothing$ is connected.
\end{enumerate}
\end{definition}

Clearly, both notions are invariant under automorphisms of $\CP^n$. 
Our interest in projective convexity is that it gives a purely geometric sufficient condition for validity 
of Theorem \ref{th:main}. This gives many new examples of Oka domains in Euclidean and projective spaces.

%
%
\begin{theorem}\label{th:projconvex}
If $K\subsetneq\CP^n$ for $n>1$ is a compact projectively convex set, then the following assertions hold.
\begin{enumerate}[\rm (a)]
\item For every projective hyperplane $\Lambda\subset \CP^n$ with $\Lambda\cap K=\varnothing$ 
the set $K$ is polynomially convex in $\CP^n\setminus \Lambda\cong \C^n$.
\item $\CP^n\setminus K$ is an Oka manifold.
\item For every projective hyperplane $H\subset\CP^n$ the set $\CP^n\setminus (H\cup K)$ is an Oka manifold.
\end{enumerate}
\end{theorem}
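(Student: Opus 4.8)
The plan is to reduce Theorem \ref{th:projconvex} to Theorem \ref{th:main} (equivalently Theorem \ref{th:mainbis}) by unpacking the definition of projective convexity. The statement has three parts, and the key observation is that parts (b) and (c) will follow almost immediately from part (a) together with results already established, so the entire content lives in proving (a): that a compact projectively convex set $K\subsetneq\CP^n$ is polynomially convex in the affine chart $\CP^n\setminus\Lambda\cong\C^n$ for \emph{every} hyperplane $\Lambda$ disjoint from $K$.

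To prove (a), I would first recall that polynomial convexity of a compact set in $\C^n=\CP^n\setminus\Lambda$ can be detected by separating each exterior point from $K$ by the zero set of a polynomial. The natural candidates here are precisely the affine hyperplanes of $\C^n$, which correspond to projective hyperplanes $\Lambda'\subset\CP^n$ passing through a given point and meeting $\Lambda$ only at infinity. Fix $\Lambda$ with $\Lambda\cap K=\varnothing$ and identify $\CP^n\setminus\Lambda$ with $\C^n$. Given a point $p\in\C^n\setminus K$, weak projective convexity supplies a projective hyperplane $\Lambda_p\ni p$ with $\Lambda_p\cap K=\varnothing$. The subtlety is that $\Lambda_p$ may not be an \emph{affine} hyperplane in this chart (it could meet $\Lambda$ badly, i.e. share a codimension-one piece with the hyperplane at infinity), so it need not directly give a polynomial separating $p$ from $K$. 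This is where the connectedness hypothesis in the definition of projective convexity enters: the space $\Hcal_K$ of hyperplanes disjoint from $K$ is connected, so $\Lambda_p$ can be connected to $\Lambda$ by a path in $\Hcal_K$. I expect the main work — and the main obstacle — to be showing that along such a path one can arrange the separating hyperplane to remain affine in the chart $\CP^n\setminus\Lambda$, or alternatively to invoke Theorem \ref{th:Oka5.1} to transfer polynomial convexity from one affine chart to another along a path of disjoint hyperplanes.

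Indeed, Theorem \ref{th:Oka5.1} is tailor-made for this: it says that if $K$ is polynomially convex in \emph{one} chart $\CP^n\setminus\Lambda$, then it is polynomially convex in $\CP^n\setminus\Lambda'$ for every hyperplane $\Lambda'$ connected to $\Lambda$ by a path in $\Hcal_K$. So the strategy for (a) splits cleanly: (i) establish polynomial convexity in at least one chart, and (ii) propagate it to all charts using connectedness of $\Hcal_K$ and Theorem \ref{th:Oka5.1}. For step (i), I would use weak projective convexity to produce, for each exterior point $p$, an affine separating hyperplane in a conveniently chosen chart; the cleanest route is to pick the reference chart so that the weak-convexity hyperplane through $p$ becomes an affine hyperplane $\{l=0\}$ there, whence $l$ is a polynomial vanishing at $p$ and nonvanishing on $K$, giving polynomial convexity in that chart. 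Then step (ii) upgrades this to every disjoint hyperplane via connectedness.

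With (a) in hand, the remaining parts are short. For (c): given any hyperplane $H$, part (a) provides a hyperplane $\Lambda$ disjoint from $K$ in which $K$ is polynomially convex in $\CP^n\setminus\Lambda\cong\C^n$; Theorem \ref{th:mainbis} then applies directly to conclude that $\CP^n\setminus(H\cup K)$ is Oka. For (b): since $K\subsetneq\CP^n$ and $K$ is weakly projectively convex, there exists at least one hyperplane $\Lambda_0$ disjoint from $K$, and by (a) $K$ is polynomially convex in $\CP^n\setminus\Lambda_0$; Theorem \ref{th:Oka5.1} then yields directly that $\CP^n\setminus K$ is Oka. Thus the whole theorem rests on part (a), and within (a) the essential point is the interplay between the purely set-theoretic separation coming from weak projective convexity and the chart-independence of polynomial convexity guaranteed by connectedness of $\Hcal_K$ together with Theorem \ref{th:Oka5.1}.
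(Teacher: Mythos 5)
Your handling of parts (b) and (c) is fine and close to the paper (for (c) via Theorem \ref{th:mainbis}; for (b) the paper instead covers $\CP^n\setminus K$ by $n+1$ Zariski-open Oka domains $\CP^n\setminus(K\cup\Lambda_i)$ and applies Kusakabe's localization theorem, but your appeal to Theorem \ref{th:Oka5.1} is equally legitimate). The genuine gap is in step (i) of your argument for (a). You assert that producing, for each $p\notin K$, an affine hyperplane $\{l=0\}$ through $p$ with $l\neq 0$ on $K$ ``gives polynomial convexity in that chart''. This is false: to show $p\notin\widehat K$ one needs a polynomial $f$ with $|f(p)|>\sup_K|f|$, and a linear $l$ with $l(p)=0$ does the opposite. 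Concretely, for the torus $K=\{|z_1|=|z_2|=1\}\subset\C^2$ the hyperplane $\{z_1=0\}$ passes through the origin and misses $K$, yet the origin lies in the polynomial hull $\widehat K$ (the closed bidisc). So weak projective convexity alone does not yield polynomial convexity in any chart, and step (i) never gets off the ground; step (ii) (transfer between charts via Theorem \ref{th:Oka5.1}) would be fine, but it has nothing to start from. A further, smaller defect is that your chart in step (i) depends on $p$, so even if the separation worked you would not obtain polynomial convexity in a single fixed chart.

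The missing idea --- and the actual content of the paper's proof of (a) --- is Oka's criterion for polynomial convexity (Oka 1937; Stout, Theorem 2.1.3): $p\notin\widehat K$ provided there is a continuous family of affine complex hyperplanes $\Sigma_t\subset\C^n\setminus K$ for $t\in(0,1]$ with $p\in\Sigma_1$ and $\Sigma_t$ diverging to infinity as $t\to 0$. This is precisely where the connectedness of $\Hcal_K$ enters: fixing $\Lambda_0\in\Hcal_K$ and identifying $\C^n=\CP^n\setminus\Lambda_0$, one uses weak projective convexity to find $\Lambda_1\in\Hcal_K$ through $p$, connects $\Lambda_1$ to $\Lambda_0$ by a path $\Lambda_t$ in $\Hcal_K$ (with $\Lambda_t\neq\Lambda_0$ for $t>0$), and takes $\Sigma_t=\Lambda_t\setminus\Lambda_0$; these are affine hyperplanes in $\C^n\setminus K$ escaping to infinity as $\Lambda_t\to\Lambda_0$. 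Thus connectedness is used to certify non-membership in the hull directly in the chart $\CP^n\setminus\Lambda_0$, not merely to transport an already-established polynomial convexity from one chart to another.
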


\begin{proof}
To prove (a) we use the argument from \cite[proof of Theorem 5.1]{Forstneric2022Oka}.
Denote by $\Hcal_K$ the open set of hyperplanes $\Lambda\subset \CP^n$ with 
$K\cap\Lambda=\varnothing$. Note that $\Hcal_K$ is connected since $K$ is assumed to be projectively convex.
Fix  $\Lambda_0\in \Hcal_K$. Given a point $p\in \CP^n\setminus (K\cup\Lambda_0)$,
there is a path $\Lambda_t\in \Hcal_K$ for $t\in[0,1]$ connecting 
$\Lambda_0$ to a hyperplane $\Lambda_1$ with $p\in \Lambda_1$. We may assume that 
$\Lambda_t\ne \Lambda_0$ for $t\in (0,1]$. Note that 
$K\subset \CP^n\setminus \Lambda_0 = \C^n$, and $\Sigma_t:=\Lambda_t\setminus \Lambda_0\subset\C^n$ 
for $t\in  (0,1]$ is a path of affine complex hyperplanes in $\C^n\setminus K$ such that 
$p\in \Sigma_1$ and $\Sigma_t$ diverges to infinity as $t\to 0$. 
By Oka's criterion (see Oka \cite{Oka1937} and Stout \cite[Theorem 2.1.3]{Stout2007})
this means that $p$ does not belong to the polynomial hull of $K$ in $\CP^n\setminus \Lambda_0$. 
Since this holds for every point $p\in \CP^n\setminus (K\cup\Lambda_0)$, we conclude that $K$ is 
polynomially convex in $\CP^n\setminus \Lambda_0$. This proves (a).

To prove (b), choose hyperplanes $\Lambda_0,\Lambda_1,\ldots,\Lambda_n \in \Hcal_K$ such that 
$\bigcap_{i=0}^n \Lambda_i=\varnothing$. Then, 
\[
	\CP^n\setminus K =\bigcup_{i=0}^n \CP^n\setminus (K\cup \Lambda_i).
\]
By part (a), $K$ is polynomially convex in $\CP^n\setminus \Lambda_i\cong \C^n$ for every $i=0,\ldots, n$, 
and hence the domain $\CP^n\setminus (K\cup\Lambda_i)=(\CP^n\setminus \Lambda_i)\setminus K$ is Oka by  
\cite[Theorem 1.2 and Corollary 1.3]{Kusakabe2020complements}. 
This gives a covering of $\CP^n\setminus K $ by Zariski open Oka domains, so 
$\CP^n\setminus K$ is Oka by the localization theorem \cite[Theorem 1.4]{Kusakabe2021IUMJ}. 

Part (c) follows from (a) and Theorem \ref{th:mainbis}, which is equivalent to Theorem \ref{th:main} proved
in the previous section.
\end{proof} 

%
%
\begin{example}\label{ex:tubearoundhyperplane}
In $\C^n$ with coordinates $(z',z_n)$ we consider a domain of the form
\[
	\Omega=\{(z',z_n)\in\C^n : |z_n|^2 < c(1+|z'|^2)\},\quad c>0.
\]
Let $H$ denote the hyperplane at infinity and $\Lambda=\{z_n=0\}$. In suitable affine coordinates
$w=(w',w_n)$ on $\CP^n\setminus \Lambda = (\C^n\cup H)\setminus \Lambda\cong\C^n$ in which 
$\Lambda$ is the hyperplane at infinity and $H=\{w_n=0\}$, the domain $\Omega$ is the complement of 
$H\cup \overline{\B}$ where $\B$ is a ball centred at the origin. Hence, $\Omega$ is an Oka domain
by part (c) of Theorem \ref{th:projconvex}.
\end{example}

%
%
\begin{remark}
The use of hyperplanes in the proof of Theorem \ref{th:projconvex}
can be replaced by more general projective hypersurfaces
to give the following criterion for validity of Theorem \ref{th:Oka5.1}. 
Let $K$ be a compact subset of $\CP^n$ and $\Lambda\subset\CP^n\setminus K$
be a hyperplane. Assume that for every $p\in \CP^n\setminus (K\cup\Lambda)$ there is
a continuous $1$-parameter family of compact hypersurfaces $A_t\subset \CP^n\setminus K$
$(t\in [0,1])$ such that $p\in A_0$ and $A_t$ converges to $\Lambda$ as $t\to 1$ in the sense
that for every neighbourhood $U\subset\CP^n$ of $\Lambda$ there is a 
$c\in(0,1)$ such that $A_t\subset U$ for all $t\in (c,1)$. Then $K$ is polynomially convex in 
$\CP^n\setminus \Lambda$, and hence Theorem \ref{th:Oka5.1} holds.
This is seen by the argument in the proof of Theorem \ref{th:projconvex}, 
using Oka's criterion for polynomial convexity.
\end{remark}

%
%
We now give a geometric characterization of closed sets in $\C^n$ having projectively convex closures. 
Assume that $\Lambda$ is a complex affine subspace of dimension $k\in \{1,\ldots,n-1\}$ in $\C^n$ 
and $p\in \Lambda$. In suitable affine complex coordinates $z=(z',z'')\in \C^k\times  \C^{n-k}$ 
we have that $p=0$ and $\Lambda=\{z''=0\}$. Given $c>0$ we define
\begin{equation}\label{def:cone}
	C(\Lambda,p,c)=\{(z',z'')\in\C^n: |z''|\le c|z'|\}.
\end{equation}
This is a closed cone with the axis $\Lambda$ and vertex $p$. The analogous definition 
makes sense for real affine subspaces of $\R^n$.

The proof of the following elementary observation is left to the reader. 
 
\begin{lemma}\label{lem:stable}
Let $E$ be a closed set in $\C^n$ and $\Lambda$ be a complex affine subspace of $\C^n$.
Consider $\C^n$ as a domain in $\CP^n$ and set $H=\CP^n\setminus \C^n$.
The following are equivalent.
\begin{enumerate}[\rm (i)]
\item There is a point $p\in \Lambda$ and a number $c>0$ such that $C(\Lambda,p,c) \cap E$ is compact.
\item The projective closure of $\Lambda$ does not intersect $\overline E$ along $H$, i.e., 
$\overline E\cap \overline \Lambda\cap H=\varnothing$.
\end{enumerate}
We shall say that $\Lambda$ is {\em $E$-stable} if these equivalent conditions hold.
\end{lemma}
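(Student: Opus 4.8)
The plan is to prove the equivalence of conditions (i) and (ii) in Lemma \ref{lem:stable} by working in the affine coordinates $z=(z',z'')\in\C^k\times\C^{n-k}$ in which $p=0$ and $\Lambda=\{z''=0\}$, and then understanding how the cone $C(\Lambda,p,c)$ and the projective closure $\overline\Lambda$ meet the hyperplane at infinity $H$. The key geometric observation is that a sequence $z_j\in C(\Lambda,p,c)$ escaping to infinity in $\C^n$ has a projective limit on $H$ lying in $\overline\Lambda\cap H$: since $|z_j''|\le c|z_j'|$, the $z'$-component dominates, so in homogeneous coordinates the limit direction has vanishing $z''$-part and therefore lies on the projectivization of $\Lambda$. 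I would set up homogeneous coordinates $[w_0:w':w'']$ on $\CP^n$ with the affine chart given by $w_0=1$, so that $H=\{w_0=0\}$ and $\overline\Lambda\cap H=\{w_0=0,\ w''=0\}$.

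First I would prove that (i) implies (ii) by contraposition. Suppose $\overline E\cap\overline\Lambda\cap H\ne\varnothing$, i.e., there is a point $q\in\overline E\cap\overline\Lambda\cap H$. Then $q$ is a limit in $\CP^n$ of a sequence $z_j\in E$ with $|z_j|\to\infty$, and because $q\in\overline\Lambda\cap H=\{w_0=0,w''=0\}$ the direction of $z_j$ converges to a direction tangent to $\Lambda$, which forces $|z_j''|/|z_j'|\to 0$. Hence for every $c>0$ the tail of the sequence eventually lies in $C(\Lambda,p,c)$, so $C(\Lambda,p,c)\cap E$ contains an unbounded sequence and is not compact. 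This contradicts (i) for every $c$, giving the contrapositive of the implication (i)$\Rightarrow$(ii).

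Conversely, to show (ii) implies (i), I would assume $\overline E\cap\overline\Lambda\cap H=\varnothing$ and argue that for suitable $c>0$ the set $C(\Lambda,p,c)\cap E$ is bounded, hence compact (it is closed as the intersection of two closed sets). The point is that $\overline\Lambda\cap H$ is a compact set in $\CP^n$ disjoint from the compact set $\overline E\cap H$, so by compactness they have disjoint neighbourhoods in $\CP^n$. The projective closures of the cones $C(\Lambda,p,c)$ shrink toward $\overline\Lambda$ near $H$ as $c\to 0$: explicitly, the part of $\overline{C(\Lambda,p,c)}$ meeting a fixed neighbourhood of $H$ is contained in an arbitrarily small neighbourhood of $\overline\Lambda\cap H$ once $c$ is small. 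Choosing $c$ small enough that this neighbourhood is disjoint from $\overline E$ near $H$ shows that $C(\Lambda,p,c)\cap E$ cannot accumulate on $H$, hence is bounded in $\C^n$.

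The main obstacle, and the step deserving the most care, is the precise compactness-and-neighbourhood argument in the second implication: one must verify that the projective closures $\overline{C(\Lambda,p,c)}$ collapse onto $\overline\Lambda$ uniformly near the hyperplane at infinity as $c\to 0$, so that disjointness of $\overline E\cap H$ from $\overline\Lambda\cap H$ can be promoted to disjointness from a genuine cone $C(\Lambda,p,c)$ of positive opening in a full neighbourhood of $H$. This is where the interplay between the affine cone condition $|z''|\le c|z'|$ and its closure in $\CP^n$ must be controlled; the remaining estimates are routine once the homogeneous-coordinate description of $\overline{C(\Lambda,p,c)}\cap H$ is in hand. Since the statement is labelled elementary and the proof is left to the reader, I would keep the exposition brief, emphasizing the homogeneous-coordinate picture and the domination $|z''|\le c|z'|$ as the crux.
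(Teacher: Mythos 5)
The paper gives no proof of this lemma (it is explicitly left to the reader), and your argument is the intended elementary one: in homogeneous coordinates $[w_0:w':w'']$ the cone condition $|z''|\le c|z'|$ exactly captures accumulation of $E$ at $\overline\Lambda\cap H=\{w_0=0,\,w''=0\}$, and both implications follow by the sequence and neighbourhood arguments you describe (for (ii)$\Rightarrow$(i), $\overline{C(\Lambda,p,c)}\cap H$ shrinks to $\overline\Lambda\cap H$ as $c\to0$, so for small $c$ the compact set $\overline{C(\Lambda,p,c)}\cap\overline E$ misses $H$ and hence lies in $\C^n$). The only point worth making explicit is that in the contrapositive of (i)$\Rightarrow$(ii) your unbounded sequence eventually lies in $C(\Lambda,p,c)$ for \emph{every} vertex $p\in\Lambda$ and every $c>0$ (since $|z_j'-p'|\ge|z_j'|-|p'|$ and $|z_j'|\to\infty$), which is needed because $p$ and $c$ are existentially quantified in (i); cf.\ Remark \ref{rem:stable}(a).
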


%
%
\begin{remark}\label{rem:stable}
Let $E$ and $\Lambda$ be as in Lemma \ref{lem:stable}.
\begin{enumerate}[\rm (a)]
\item If the condition in Lemma \ref{lem:stable} (i)  holds for a point $p_0\in\Lambda$, then it holds for every 
point $p\in \Lambda$ (with a constant $c>0$ depending on $p$). 
\item If $\Lambda$ is $E$-stable then so is any parallel translate $\Lambda'$ of $\Lambda$. In fact,
$\overline \Lambda'\cap H=\overline \Lambda\cap H$. 
\item The space of $E$-stable $k$-dimensional affine subspaces $\Lambda\subset \C^n\setminus E$ is open.
\end{enumerate}
\end{remark}

%
%
\begin{proposition}\label{prop:hyperconvex}
Let $E$ be a closed subset of $\C^n$. Then the closure $\overline E\subset\CP^n$ 
is projectively convex if and only if the following three conditions hold.
\begin{enumerate}[\rm (i)]
\item Every point $p\in \C^n\setminus E$ lies in an $E$-stable hyperplane $\Lambda\subset\C^n$
such that $E\cap\Lambda=\varnothing$. 
\item Every $E$-stable affine complex line $L\subset \C^n$ has a
parallel translate contained in an $E$-stable complex hyperplane $\Lambda \subset \C^n\setminus E$. 
\item The space of $E$-stable affine complex hyperplanes in $\C^n\setminus E$ is connected.
\end{enumerate}
\end{proposition}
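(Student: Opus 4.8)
The plan is to translate both defining conditions of projective convexity of $\overline E$ (Definition \ref{def:projconvex}) into the affine picture via the standard dictionary between projective hyperplanes and affine complex hyperplanes, and then match the resulting affine statements against (i)--(iii). Write $H=\CP^n\setminus\C^n$ for the hyperplane at infinity. Every projective hyperplane $\Lambda\ne H$ is the projective closure $\overline A$ of a unique affine complex hyperplane $A=\Lambda\cap\C^n$, and $A\mapsto\overline A$ is a homeomorphism from the space of affine complex hyperplanes onto $(\CP^n)^*\setminus\{H\}$, where $(\CP^n)^*$ denotes the dual projective space of all projective hyperplanes. First I would record the basic dictionary: decomposing $\overline A\cap\overline E$ into its affine and infinite parts and using $\overline E\cap\C^n=E$, one checks that $\overline A\cap\overline E=\varnothing$ holds precisely when $A\cap E=\varnothing$ and $\overline A\cap\overline E\cap H=\varnothing$, the latter being exactly the $E$-stability of $A$ by Lemma \ref{lem:stable}. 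Thus the elements of $\Hcal_{\overline E}\setminus\{H\}$ correspond bijectively to the $E$-stable affine complex hyperplanes contained in $\C^n\setminus E$.

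Next I would establish that $\overline E$ is weakly projectively convex if and only if (i) and (ii) hold, by splitting $\CP^n\setminus\overline E=(\C^n\setminus E)\sqcup(H\setminus\overline E)$ and asking which points $p$ are covered by a hyperplane disjoint from $\overline E$. For an affine point $p\in\C^n\setminus E$ any hyperplane through $p$ differs from $H$, so by the dictionary weak projective convexity at $p$ is exactly the existence of an $E$-stable hyperplane through $p$ disjoint from $E$, which is condition (i). For a point at infinity $p\in H\setminus\overline E$ I would use the second half of the dictionary: a complex line $L$ with direction $\overline L\cap H=\{p\}$ is $E$-stable iff $p\notin\overline E$, so the points of $H\setminus\overline E$ are exactly the directions of $E$-stable lines, and $p\in\overline A\cap H$ means precisely that a parallel translate of such an $L$ lies in $A$. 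Hence covering every $p\in H\setminus\overline E$ by a hyperplane $\overline A\ne H$ disjoint from $\overline E$ is exactly condition (ii). The one point needing care is that an infinite point could a priori be covered by $\Lambda=H$ itself; this happens only when $E$ is bounded, in which case $\overline E\cap H=\varnothing$, every affine hyperplane is $E$-stable, and a translate pushed far from the compact set $E$ realizes (ii) directly --- so (ii) holds regardless, and conversely (i) together with (ii) recover weak projective convexity at every point.

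Finally I would match the connectivity requirement with (iii). Under the homeomorphism $A\mapsto\overline A$, the space of $E$-stable affine complex hyperplanes in $\C^n\setminus E$ appearing in (iii) is identified with $\Hcal_{\overline E}\setminus\{H\}$. Since $\Hcal_{\overline E}$ is an open subset of $(\CP^n)^*$, which for $n>1$ is a manifold of real dimension at least $4$, deleting or adjoining the single point $H$ does not affect connectedness: $H$ is not isolated in $\Hcal_{\overline E}$, and any path meeting it can be perturbed off the point in a punctured neighbourhood of dimension at least $2$. Therefore $\Hcal_{\overline E}$ is connected iff $\Hcal_{\overline E}\setminus\{H\}$ is connected iff (iii) holds. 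Combining the two equivalences gives that $\overline E$ is projectively convex iff (i), (ii), and (iii) all hold. I expect the genuine bookkeeping difficulty to be the systematic treatment of $H$ --- keeping track of whether it belongs to $\Hcal_{\overline E}$, equivalently whether $E$ is bounded, in both the weak-convexity step and the connectivity step --- together with the careful verification, via Lemma \ref{lem:stable} and Remark \ref{rem:stable}, that the $E$-stability of $A$ is exactly the vanishing of the infinite part $\overline A\cap\overline E\cap H$.
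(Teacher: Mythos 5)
Your proposal is correct and follows essentially the same route as the paper: both arguments are a direct translation of the two clauses of Definition \ref{def:projconvex} into the affine picture via Lemma \ref{lem:stable}, identifying weak projective convexity at affine points with (i), at points of $H$ with (ii), and the connectivity of $\Hcal_{\overline E}$ with (iii). If anything you are more careful than the paper, which silently identifies the space in (iii) with $\Hcal_{\overline E}$ without discussing whether $H$ itself belongs to it; your observation that adjoining or deleting the non-isolated point $H$ from the open set $\Hcal_{\overline E}\subset(\CP^n)^*$ does not affect connectedness closes that small gap.
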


\begin{proof}
Let $H=\CP^n\setminus \C^n$. In view of Lemma \ref{lem:stable}, condition (i) means that 
$\C^n \setminus E$ is a union of affine hyperplanes whose projective closures do not intersect
$\overline E$. Condition (ii) means that for every point $p\in H\setminus \overline E$ there exists
an affine hyperplane $\Lambda\subset \C^n\setminus E$ such that 
$p\in \overline \Lambda\subset \CP^n\setminus \overline E$. Hence, (i) and (ii) together
are equivalent to $\CP^n\setminus \overline E$ being the union of projective hyperplanes,  
i.e., weakly projectively convex. Finally, condition (iii) means that the space of projective hyperplanes lying 
in $\CP^n\setminus \overline E$ is connected. Hence, the three conditions together are equivalent to 
$\overline E$ being projectively convex.
\end{proof}

The following is an immediate corollary to Theorem \ref{th:main} and Proposition \ref{prop:hyperconvex}. 

\begin{corollary}\label{cor:projconvex}
If a closed set $E$ in $\C^n$ for $n>1$ satisfies the hypotheses in Proposition \ref{prop:hyperconvex}
then $\C^n\setminus E$ is an Oka domain. 
\end{corollary}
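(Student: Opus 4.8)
The plan is to use projective convexity as the bridge between the two cited results. The first step is to reinterpret the hypothesis: by the statement of Proposition \ref{prop:hyperconvex}, its three conditions (i), (ii), (iii), taken together, are logically equivalent to the assertion that the projective closure $\overline E\subset\CP^n$ is projectively convex. So I would begin by simply recording that $\overline E$ is a compact projectively convex subset of $\CP^n$.

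Next I would extract a suitable hyperplane on which to run Theorem \ref{th:main}. Assuming $\C^n\setminus E\neq\varnothing$ (otherwise there is nothing to prove), the complement $\CP^n\setminus\overline E$ is nonempty, so weak projective convexity supplies a projective hyperplane $\Lambda$ with $\Lambda\cap\overline E=\varnothing$. Applying Theorem \ref{th:projconvex}(a) to $K=\overline E$ and this $\Lambda$ then yields that $\overline E$ is polynomially convex in the affine chart $\CP^n\setminus\Lambda\cong\C^n$. At this point both hypotheses of Theorem \ref{th:main} are met for the pair $(\overline E,\Lambda)$: we have $\overline E\cap\Lambda=\varnothing$ and $\overline E$ polynomially convex in $\CP^n\setminus\Lambda$. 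Theorem \ref{th:main} therefore gives directly that $\C^n\setminus E$ is an Oka domain.

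Since every step is a verbatim citation, I do not expect any genuine obstacle; the only point needing a word of care is the nonemptiness of the space $\Hcal_{\overline E}$ of disjoint hyperplanes, which is immediate from weak projective convexity once the complement is seen to be nonempty. As an alternative that avoids naming $\Lambda$, one may instead note the identity $\C^n\setminus E=\CP^n\setminus(H\cup\overline E)$, with $H$ the hyperplane at infinity, and invoke Theorem \ref{th:projconvex}(c) for the projectively convex set $\overline E$; but the route through Theorem \ref{th:main} is the one the statement advertises.
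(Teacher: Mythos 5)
Your proposal is correct and follows exactly the route the paper intends: Proposition \ref{prop:hyperconvex} identifies the hypotheses with projective convexity of $\overline E$, Theorem \ref{th:projconvex}(a) supplies the polynomial convexity of $\overline E$ in $\CP^n\setminus\Lambda$ for a hyperplane $\Lambda$ furnished by weak projective convexity, and Theorem \ref{th:main} concludes. The paper leaves this as ``immediate,'' and your write-up (including the remark on nonemptiness of $\Hcal_{\overline E}$ and the alternative via Theorem \ref{th:projconvex}(c)) fills in the details faithfully.
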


As an application of this result, we show the following.

\begin{proposition}\label{prop:C2minusR}
The complement of an affine real line in $\C^2$ is Oka. Furthermore, the complement of a closed disc-tube
around such a line is Oka.
\end{proposition}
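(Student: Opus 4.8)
The plan is to use the projective-convexity machinery of this section, but \emph{not} by applying Theorem~\ref{th:main} to $L$ itself. First I would normalise, via a complex-affine automorphism of $\C^2$, so that $L=\R\times\{0\}$; then its projective closure $\overline L\subset\CP^2$ lies in the projective line $M=\overline{\{z_2=0\}}$ and is a smooth circle there (the real axis completed by the point $[0:1:0]$). The key thing to record at the outset is an obstruction: a circle contained in a projective line is never polynomially convex in any affine chart $\CP^2\setminus\Lambda$, since its hull fills in one of the two discs that the circle cuts out of $M\cong\CP^1$. Hence $\overline L$ is not polynomially convex in any chart and is not projectively convex, so neither Theorem~\ref{th:main} nor a direct appeal to projective convexity of $\overline L$ is available, and a less direct route is forced.

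The decisive observation is that, although $\overline L$ is not projectively convex, each of the two closed discs $\overline{D_\pm}\subset M$ bounded by $\overline L$ \emph{is}. Indeed a projective line misses $\overline{D_+}$ exactly when it meets $M$ in the complementary open disc $D_-^{\circ}$, and the space of such lines maps onto the connected set $D_-^{\circ}$ with connected fibres (the pencils through a point of $M$, minus $M$ itself), so Definition~\ref{def:projconvex} holds; weak projective convexity is immediate, and symmetrically for $\overline{D_-}$. Writing $H$ for the hyperplane at infinity, Theorem~\ref{th:projconvex}(c) then gives that the two domains
\[
	U_\pm \;=\; \CP^2\setminus\bigl(H\cup\overline{D_\pm}\bigr)\;=\;\C^2\setminus\overline{H_\pm},
	\qquad \overline{H_\pm}=\{z_2=0,\ \pm\Im z_1\ge 0\},
\]
are Oka. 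Since $\overline{H_+}\cap\overline{H_-}=L$ and $\overline{H_+}\cup\overline{H_-}=\{z_2=0\}$, these two Oka halves satisfy $U_+\cup U_-=\C^2\setminus L$ and $U_+\cap U_-=\C\times\C^*$, the latter again Oka as a product of Oka manifolds.

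It remains to deduce that the union $\C^2\setminus L=U_+\cup U_-$ is Oka, and this is the step I expect to be the main obstacle: the cover is \emph{not} Zariski-open (each $U_\pm$ is obtained by deleting a totally real half-plane, not a subvariety), so Kusakabe's localization theorem does not apply. Instead I would verify Condition $\mathrm{Ell}_1$ (Theorem~\ref{th:Ell1}) for $\C^2\setminus L$ directly. Given a core $f\colon S\to\C^2\setminus L$ on a compact convex $S$, I would subdivide $S$ into small convex pieces $S_j$ with each $f(S_j)$ contained in $U_+$ or in $U_-$ (possible by compactness of $f(S)$ and a Lebesgue-number argument), take a dominating spray $F_j$ valued in the corresponding Oka half, and glue the $F_j$ inductively over a Cartan subdivision of the parameter domain. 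The point that makes the gluing work is that over the overlaps the sprays take values in $U_+\cap U_-=\C\times\C^*$, which is Oka, so all transition corrections can be carried out there and stay inside $\C^2\setminus L$; this confines the only analytically delicate part to the Oka overlap, which is exactly what rescues the argument from the failure of localization.

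Finally, for the disc-tube $E$ around $L$ the same obstruction persists (the tube contains $\overline L$ but not the far parts of its filling discs), and I would run the same two-piece scheme: cap $\overline E$ on each side by the corresponding far half of the complex line $\{z_2=0\}$ carrying the core, obtaining sets $E_\pm$ with $E_+\cap E_-=\overline E$, check that each $E_\pm$ is projectively convex so that $\C^2\setminus E_\pm$ is Oka by Theorem~\ref{th:projconvex}(c), and glue as in the line case. The extra verification here is the projective convexity of the capped pieces; the gluing step is identical. As a consistency check, since $\C^2\setminus L=\bigcup_{r>0}(\C^2\setminus E_r)$ is an increasing union over shrinking tubes, the tube statement also recovers the line statement via \cite[Proposition~5.6.7]{Forstneric2017E}.
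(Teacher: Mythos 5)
Your diagnosis of why the direct route fails is correct, and it is worth stressing because it cuts against the paper's own proof. The closure $\overline L\subset\CP^2$ is a Jordan curve inside the projective line $M=\overline{\{z_2=0\}}$, so in any affine chart $\CP^2\setminus\Lambda$ with $\Lambda\cap\overline L=\varnothing$ its polynomial hull contains the bounded complementary disc of $\overline L$ in $M\setminus\Lambda\cong\C$; hence $\overline L$ is polynomially convex in no chart and Theorem~\ref{th:main} cannot be applied to $L$ itself. Equivalently, every affine complex line avoiding $L$ and not parallel to $\{z_2=0\}$ meets the complex line $\{z_2=0\}$ in a single point of $\C\setminus\R$, and this intersection point depends continuously on the line and attains values in both half-planes, so the space of such lines is disconnected and condition (iii) of Proposition~\ref{prop:hyperconvex} fails. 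The paper's proof, by contrast, applies Corollary~\ref{cor:projconvex} directly to $E$ and asserts that this space of lines is connected; your component count shows that assertion is false, so you have in fact located a defect in the paper's argument rather than merely chosen a different route. Your next step is also sound: each closed disc $\overline{D_\pm}\subset M$ is projectively convex, so $U_\pm=\C^2\setminus\overline{H_\pm}$ is Oka by Theorem~\ref{th:projconvex}(c), and $U_+\cap U_-=\C\times\C^*$ is Oka.

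The genuine gap in your argument is exactly the step you flagged: passing from ``$U_+$ and $U_-$ Oka with Oka intersection'' to ``$U_+\cup U_-$ Oka''. The cover is not Zariski-open, so the localization theorem \cite[Theorem 1.4]{Kusakabe2021IUMJ} is unavailable, and the spray gluing you sketch does not go through. Gluing dominating sprays over a Cartan pair requires the two sprays to be uniformly close on the overlap, so that a fibre-preserving transition map near the identity exists and can be split; knowing only that both sprays send the overlap into the Oka manifold $\C\times\C^*$ produces no such transition map, and there is no Runge-type approximation of one spray by the other available here --- that approximation is precisely what Zariski-openness provides in Kusakabe's localization argument. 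If your scheme worked as described, it would prove that every complex manifold covered by two Oka open subsets with Oka intersection is Oka, a statement far beyond what the localization theorem gives and not known to hold. The same gap occurs verbatim in your treatment of the disc-tube, and the closing reduction of the line case to the tube case via an increasing union, while correct in itself, inherits it. As it stands, then, your argument establishes that $U_\pm$ are Oka but not that their union is, and verifying Condition $\mathrm{Ell}_1$ of Theorem~\ref{th:Ell1} for $\C^2\setminus L$ requires an ingredient beyond the two Oka halves.
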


\begin{proof}
Choose coordinates $z_1=x_1+\imath y_1,\ z_2=x_2+\imath y_2$ on $\C^2$ so that the 
line in question is the $x_1$-axis and the tube is $E=\{y_1^2+|z_2|^2<c\}$ for $c>0$. The intersection of
$\overline E\subset\CP^2$ with the line at infinity $H=\CP^2\setminus \C^2$ is the single point $p$ determined by 
the $z_1$-axis. Any other point $q\in H\setminus \{p\}$ determines a complex line which is linearly independent 
of the $z_1$-axis. Clearly, such a line is $E$-stable and a parallel translate of it avoids the $x_1$-axis 
by dimension reasons. It is also possible to avoid the tube $E$ around the $x_1$-axis. 
It is elementary to see that the space of affine complex lines avoiding $E$ and linearly independent 
of the $z_1$-axis is connected. Hence, the conclusion follows from Corollary \ref{cor:projconvex}. 
\end{proof}

%
%
\begin{remark}\label{rem:CnRk}
It was shown by Kusakabe \cite[Corollary 1.7]{Kusakabe2020complements} that if 
$(n,k)$ is a pair of integers $1\le k\le n$ with $n\ge 3$ and $(n,k)\ne (3,3)$ 
then for any closed set $E$ in $\R^k\subset \C^n$ the complement $\C^n\setminus E$ is Oka;
in particular, $\C^n\setminus \R^k$ is Oka for these pairs of values $(n,k)$. 
To prove this, Kusakabe used his theorem (see \cite[Theorem 1.6]{Kusakabe2020complements}) 
saying that if $E$ is a closed, possibly unbounded polynomially convex subset of 
$\C^n=\C^{n-2}\times \C^2$ which is contained in a set of the form 
$\{(z',z''):|z''|\le c(1+|z'|)\}$ with respect to some holomorphic coordinates 
$z=(z',z'')$ on $\C^{n-2}\times \C^2$ and $c>0$, then $\C^n\setminus E$ is Oka.

This approach does not work for the exceptional cases $(n,k) \in \{(2,1),(2,2),(3,3)\}$. 
Proposition \ref{prop:C2minusR} extends Kusakabe's result to the case $n=2$, $k=1$ 
by a completely different method. The remaining two cases $(2,2)$ and $(3,3)$ are not amenable 
to this method either, so they remain an open problem.

Let us look more closely at the domain $\C^2\setminus \R^2$. The closure 
of $\R^2$ in $\CP^2$ intersects the line at infinity in a circle $S$ 
which divides $\CP^1$ in two disc components. The points of $S$ correspond to those complex lines in
$\C^2$ which intersect $\R^2\subset\C^2$ along a real line; 
these are  the complex lines which fail to be transverse to $\R^2$. 
A point $p\in \CP^1\setminus S$ determines a complex line $\Lambda\subset\C^2$ 
intersecting $\R^2$ transversely, and hence every translate of $\Lambda$ intersects $\R^2$. 
Thus, the set $E=\R^2\subset \C^2$ does not satisfy the conditions in Proposition \ref{prop:hyperconvex}. 

A similar analysis can be made for $\C^3\setminus \R^3$.
\end{remark}

%
%
%
%
\section{Convex domains in $\C^n$ with Oka complements}\label{sec:convex}

In this section we prove Theorem \ref{th:strictlyconvex} from the introduction.
We will show that for every closed convex set $E\subset \C^n$ satisfying the conditions
of that theorem, its projective closure $\overline E\subset\CP^n$ is projectively convex
(see Def.\ \ref{def:projconvex}), so the result will follow from Theorem \ref{th:projconvex}.

Given a domain $E\subset \C^n$ with $\Cscr^1$ boundary and a point 
$p\in bE$, we denote by $T^\C_p bE$ the maximal complex subspace (a complex hyperplane)
in the real tangent space $T_p bE$. (Both tangent spaces are considered 
as affine spaces passing through the point $p$.)  
Recall that a real affine subspace $p\in \Lambda\subset\C^n$ is said to be 
supporting for $E$ at $p\in bE$ if $\Lambda\cap E\subset bE$. If $E$ is convex and 
$bE$ is of class $\Cscr^1$ then this holds if and only if $\Lambda\subset T_p bE$, 
and if $\Lambda$ is complex then it holds if and only if $\Lambda\subset T_p^\C bE$.

The notion of an $E$-stable affine subspace was introduced in Lemma \ref{lem:stable}.

%
%
\begin{lemma}\label{lem:convex-stable}
Let $E$ be a closed convex set in $\C^n$ and $p\in bE$. Assume that $\Lambda\subset\C^n$
is a supporting affine complex subspace for $E$ at $p$. Then $\Lambda$ is $E$-stable if and only if 
$E\cap \Lambda$ does not contain a real halfline. The analogous result holds in the real setting.
\end{lemma}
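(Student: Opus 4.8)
The plan is to work directly with the cone characterization of $E$-stability in Lemma \ref{lem:stable}(i), reducing the statement to the extraction of asymptotic directions of $E$ that run along $\Lambda$. After an affine change of coordinates I would take $p=0$, so that $\Lambda$ becomes a complex linear subspace $V=\{z''=0\}$ in coordinates $z=(z',z'')\in\C^k\times\C^{n-k}$, and $C(\Lambda,0,c)=\{|z''|\le c|z'|\}$. Observe that $p=0\in bE\cap\Lambda\subset E\cap\Lambda$; this nonemptiness is the only place where the supporting hypothesis is used.

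For the implication ``contains a halfline $\Rightarrow$ not $E$-stable'' I would note that the axis always sits inside its cones, $\Lambda\subset C(\Lambda,q,c)$ for every vertex $q\in\Lambda$ and every $c>0$. Hence if $E\cap\Lambda$ contains a real halfline $R$, then $R\subset E\cap C(\Lambda,q,c)$ for all $q,c$, so that intersection is unbounded and never compact; by Lemma \ref{lem:stable}, $\Lambda$ is not $E$-stable. This step is immediate.

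The substantive direction is ``$E\cap\Lambda$ contains no halfline $\Rightarrow$ $\Lambda$ is $E$-stable,'' which I would argue in contrapositive form. If $\Lambda$ is not $E$-stable, Lemma \ref{lem:stable} gives that $E\cap C(\Lambda,0,1/m)$ is noncompact, hence unbounded, for every $m$. Choosing $w_m\in E\cap C(\Lambda,0,1/m)$ with $|w_m|\ge m$ and setting $u_m=w_m/|w_m|$, the cone condition $|w_m''|\le (1/m)|w_m'|$ forces $|u_m''|\le 1/m\to 0$; after passing to a subsequence, $u_m\to u$ with $|u|=1$ and $u''=0$, i.e.\ $u\in V$. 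The key point is to promote this asymptotic direction to a genuine recession direction of $E$: since $0,w_m\in E$ and $E$ is convex, the segment points $tu_m=(t/|w_m|)w_m$ lie in $E$ for all $0\le t\le |w_m|$, and letting $m\to\infty$ with $E$ closed yields $tu\in E$ for all $t\ge 0$. As $u\in V$ and $0\in\Lambda$, the halfline $\{tu:t\ge 0\}$ lies in $E\cap\Lambda$, contradicting the hypothesis. This extraction-and-limit step — making sure the limit direction is a true recession direction lying in $V$, rather than a spurious asymptotic direction that escapes — is the one place where care is needed, and it is precisely convexity together with closedness of $E$ that makes it go through.

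Conceptually, both implications express that $\Lambda$ is $E$-stable iff $\mathrm{rec}(E)\cap V=\{0\}$, where $\mathrm{rec}(E)$ is the recession cone of $E$ and $V$ the direction space of $\Lambda$, whereas $E\cap\Lambda$ has no halfline iff $\mathrm{rec}(E\cap\Lambda)=\mathrm{rec}(E)\cap V=\{0\}$; one could equally phrase the whole proof through recession cones and the standard fact that a closed convex set is bounded exactly when its recession cone is trivial. Finally, the real analogue follows by the identical argument with $V$ a real subspace and $\CP^n$ replaced by $\RP^n$: no complex scaling enters, so the directional extraction is, if anything, simpler.
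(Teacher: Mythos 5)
Your proof is correct and follows essentially the same route as the paper's: in the substantive direction you extract asymptotic unit directions from unbounded points of $E$ in shrinking cones $C(\Lambda,p,1/m)$, pass to a subsequence, and use convexity together with closedness of $E$ to turn the limit direction into a genuine halfline in $E\cap\Lambda$ emanating from $p$. The only cosmetic difference is that for the easy implication you invoke condition (i) of Lemma~\ref{lem:stable} (the cone contains its axis) where the paper invokes the equivalent condition (ii) (the endpoint of the halfline at infinity lies in $\overline E\cap\overline\Lambda$); both are immediate.
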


\begin{proof}
Choose affine coordinates $z=(z',z'')$ on $\C^n=\C^k\times \C^{n-k}$ with $k=\dim \Lambda$ such that 
$p=0$ and $\Lambda=\{z''=0\}$.

If $E\cap \Lambda$ contains a real halfline $L$, then the terminal point of $L$ at infinity
lies in the projective closure $\overline E\subset\CP^n$ of $E$, 
so $\Lambda$ is not $E$-stable. Note also that, since $E$ is closed and convex,
the line segments $l_q$ connecting the given initial point $p$ to points $q\in L$ 
moving to infinity converge to a halfline in $E\cap \Lambda$ with the finite endpoint $p$.

To prove the converse, assume that $\Lambda$ is not $E$-stable. Then the intersection of $E$ with the
closed cone $C(\Lambda,p,c)$ in \eqref{def:cone} is noncompact (and hence unbounded) for every $c>0$. 
Letting $c\to 0$ we obtain a sequence of unit vectors $v_j=(v'_j,v''_j)\in\C^n$ and numbers $t_j >0$ 
such that $t_j v_j\in E$ for all $j\in\N$, $\lim_{j\to\infty } t_j= +\infty$, and $\lim_{j\to\infty }|v''_j|/|v'_j|=0$.
By passing to a subsequence we may assume that the sequence $v'_j/|v'_j|$
converges to a unit vector $v'\in \C^k$. The line segments $l_j\subset E$ connecting 
$p=0$ to the point $t_j v_j\in E$ then converge to the halfline $\R_+ v' \in E\cap \Lambda$ terminating at $p$.
\end{proof}

Recall that affine subspaces $\Lambda$ and $V$ in $\R^n$ are said to be complementary  
if and only if $\dim \Lambda+\dim V=n$ and their intersection is a point. For vector subspaces,
this holds if and only if $\R^n$ is their direct sum $\Lambda\oplus V$. 

%
%
\begin{lemma}\label{lem:divide}
Let $E$ be a closed convex set in $\R^n$, and let $\Lambda \subset \R^n$ be an affine subspace 
such that $E\cap \Lambda$ is bounded. Then the following are equivalent.
\begin{enumerate}[\rm (i)]
\item Every parallel translate $\Lambda'$ of $\Lambda$ intersects $E$.
\item There is a vector subspace $V\subset \R^n$ complementary to $\Lambda$ such that
$E=E\cap\Lambda + V$.
\end{enumerate}
If these equivalent conditions fail then there is a translate $\Lambda'$ of $\Lambda$ which
is a supporting subspace for $E$ at a point $q\in bE\cap \Lambda'$.
\end{lemma}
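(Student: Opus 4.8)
The plan is to reduce everything to the linear projection that collapses the direction of $\Lambda$ together with the recession cone of $E$. I would write $W$ for the $k$-dimensional direction space of $\Lambda$, fix a complement $\R^n=W\oplus U$, and let $\pi\colon\R^n\to U$ be the projection along $W$; then the translates of $\Lambda$ are exactly the fibres of $\pi$, and a translate meets $E$ iff its parameter lies in the convex set $\pi(E)$. Writing $R=\mathrm{rec}(E)$ for the recession cone, the first step records that $E\cap\Lambda$ is bounded if and only if $R\cap W=\{0\}$, since the recession cone of any nonempty slice $E\cap(u+W)$ equals $R\cap W$. Thus the standing hypothesis of the lemma is precisely $R\cap W=\{0\}$, and in particular every slice of $E$ parallel to $\Lambda$ is compact.

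The implication $(ii)\Rightarrow(i)$ I would dispatch immediately: if $E=(E\cap\Lambda)+V$ with $V$ complementary to $\Lambda$, then $\pi(E)=\pi(E\cap\Lambda)+\pi(V)$, where $\pi(E\cap\Lambda)$ is the single point $\pi(\Lambda)$ and $\pi(V)=U$ (as $V$ is complementary to $W$), so $\pi(E)=U$ and every translate meets $E$.

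The substance is $(i)\Rightarrow(ii)$, and here the plan is to show that $\pi$ restricts to a linear bijection $R\to U$. Injectivity is free from $R\cap W=\{0\}$. For surjectivity I would fix $u\in U$, use (i) to choose $x_t\in E$ with $\pi(x_t)=\pi(x_0)+tu$ for $t\ge0$ (with $x_0\in E$ a fixed base point), and examine $d_t=(x_t-x_0)/t$, whose $U$-component is $u$; if the $W$-component of $x_t$ were unbounded relative to $t$ along some $t_j\to\infty$, a subsequential limit of $(x_{t_j}-x_0)/|x_{t_j}-x_0|$ would be a nonzero recession direction with vanishing $U$-component (since $t_j/|x_{t_j}-x_0|\to0$), i.e. a nonzero element of $R\cap W$, which is impossible. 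Hence the $d_t$ stay bounded and a subsequential limit gives a recession direction $d\in R$ with $\pi(d)=u$. Now a convex cone that maps linearly and bijectively onto a vector space must itself be a linear subspace: its set-theoretic inverse $\sigma\colon U\to R$ is additive and positively homogeneous (the cone is closed under sums), so $\sigma(-u)=-\sigma(u)$ and $R=-R$. Thus $R$ is a subspace $V$ with $\pi|_V\colon V\to U$ an isomorphism, giving $\dim V=n-k$, $V\cap W=\{0\}$, and $V$ complementary to $\Lambda$. To finish I would verify $E=(E\cap\Lambda)+V$: the inclusion $\supseteq$ is clear since $V=R$ consists of recession directions, while for $x\in E$ the affine space $x+V$ meets $\Lambda$ in a unique point $q$ (as $V$ and $W$ are complementary), and $q=x+(q-x)\in E$ because $q-x\in V=R$ is a recession direction; hence $q\in E\cap\Lambda$ and $x=q+(x-q)\in(E\cap\Lambda)+V$.

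For the final clause I would negate (i): some parameter $u_0\in U$ lies outside $\pi(E)$. Since $R\cap W=\{0\}$, the image $\pi(E)$ is a closed convex subset of $U$ (a linear image of a closed convex set whose recession cone meets the kernel only at $0$ is closed). Taking the nearest point $\bar u\in\pi(E)$ to $u_0$, I obtain a supporting hyperplane $\mathcal H\subset U$ of $\pi(E)$ at the boundary point $\bar u$, with normal $u_0-\bar u$; its preimage $\pi^{-1}(\mathcal H)$ is a real hyperplane of $\R^n$ containing the translate $\Lambda'=\pi^{-1}(\bar u)$ of $\Lambda$, with $E$ in one of the closed halfspaces it bounds, hence a supporting hyperplane of $E$. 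As $\bar u\in\pi(E)$ we have $\Lambda'\cap E\neq\varnothing$, and every point of $E$ lying on this supporting hyperplane—in particular every point of $\Lambda'\cap E$—belongs to $bE$; so $\Lambda'$ is a translate of $\Lambda$ supporting $E$ at any $q\in\Lambda'\cap E\subset bE$, as required. I expect the crux to be the third paragraph—extracting from (i) that the recession cone projects bijectively onto $U$ and upgrading it from a cone to a complementary linear subspace; the remaining steps are routine bookkeeping with the projection $\pi$.
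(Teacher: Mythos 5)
Your proof is correct, and it reaches the lemma by a genuinely different route from the paper. The paper argues concretely: it first treats the case where $\Lambda$ is a hyperplane, extracting from (i) half-line directions of $E$ in each of the two closed half-spaces bounded by $\Lambda$, showing these directions must be opposite (else their convex hull would meet $\Lambda$ outside the bounded set $E\cap\Lambda$), and so exhibiting $E$ as a tube $(E\cap\Lambda)+\R v$; general codimension is then handled by slicing with the $(k+1)$-dimensional subspaces containing $\Lambda$, and the final clause falls out of the observation that if $E$ is not a tube then $E\cap\{\pm x_n\ge 0\}$ is bounded for one sign, so the extremal translate is supporting. You instead quotient by the direction space $W$ of $\Lambda$ and phrase everything through the recession cone $R=\mathrm{rec}(E)$: the hypothesis becomes $R\cap W=\{0\}$, condition (i) forces $\pi|_R\colon R\to U$ to be a bijection, and your remark that a convex cone mapping bijectively onto a vector space is itself a linear subspace produces the complement $V=R$ in one stroke and uniformly in all codimensions. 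This is cleaner and more conceptual; the price is that your final clause invokes the standard but nontrivial fact that $\pi(E)$ is closed when $\mathrm{rec}(E)\cap\ker\pi=\{0\}$, whereas the paper stays elementary. One shared caveat: both arguments tacitly assume $E\cap\Lambda\ne\varnothing$. Under (i) this is automatic, but for the final clause it matters: for $E=\{(x,y)\in\R^2: y\ge e^{-x}\}$ and $\Lambda=\{y=-1\}$ the set $E\cap\Lambda=\varnothing$ is bounded, yet no translate of $\Lambda$ supports $E$; so this degenerate case is a defect of the statement rather than of your argument, and your identification of the hypothesis with $R\cap W=\{0\}$ is only valid once a nonempty bounded slice is assumed.
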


Note that the set $E\cap\Lambda + V$ in part (ii) is a tube with basis $E\cap\Lambda$ and fibre $V$.

\begin{proof}[Proof of Lemma \ref{lem:divide}] 
The implication (ii)\ $\Rightarrow$\ (i) is obvious. 

Let us now prove that (i)\ $\Rightarrow$\ (ii).
We begin with the case when $\Lambda$ is a hyperplane.
Choose coordinates $(x_1,\ldots,x_n)$ on $\R^n$ such that $\Lambda=\{x_n=0\}$.
By the assumption, the closed convex set $E'=E\cap \Lambda$ is bounded.
Let $H^\pm =\{\pm \, x_n\ge 0\}$. The assumption that every translate of $\Lambda$ intersects 
the set $E^+:=E\cap H^+$ gives a sequence of unit vectors $v_j\in H^+$ and numbers $t_j>0$ with 
$\lim_{j\to\infty} t_j=+\infty$ such that $t_j v_j\in E^+$. By compactness
of the unit sphere we may pass to a subsequence and assume that $\lim_{j\to\infty} v_j=v$.
Fix a point $x'\in E'$. The line segments connecting $x'$ to the points $t_j v_j\in E_+$ converge as $j\to\infty$
to the halfline $x'+\R_+ v$. Since $E$ is closed and convex, this halfline belongs to $E^+$.
In particular, $v\notin \Lambda$ since $E\cap \Lambda$ is bounded. 
This shows that $E^+$ contains the tube $E'+\R_+ v$. Since $E^-:=E\cap H^-$ is unbounded as well, 
the analogous argument shows that it contains a tube $E'+\R_+ w$ for some unit vector $w\in H^-$. 
Note that $w=-v$, for otherwise the convex hull of the union of these tubes contains a point in $\Lambda\setminus E'$. 
Hence, $E$ contains the tube $E'+\R v$, and the same argument as above shows that $E=E'+\R v$.

The above argument also shows that if $E$ is not a tube of the form $E'+\R v$ then at least one of the sets
$E^\pm :=E\cap H^\pm$ is bounded, and hence there is a parallel translate of $\Lambda$ satisfying
the last statement in the lemma.

Consider now the general case. We may assume that $p=0$ and 
\[
	\Lambda=\{x\in \R^n:x_{k+1}=0,\ldots,x_n=0\}.
\]
For every $j\in \{k+1,\ldots,n\}$ let $V_j\subset\R^n$ denote the subspace of dimension $k+1$ spanned 
by the coordinate directions $1,\ldots,k$ and $j$. Then $\Lambda$ is a hyperplane in $V_j$, so the 
special case proved above gives $E\cap V_j=E'+\R\cdotp v_j$ for some unit vector $v_j\in V_j\setminus \Lambda$.
Due to convexity it follows that $E$ contains the tube $E'+V$ with $V=\span\{v_{k+1},\ldots,v_n\}$.
Thus, $V$ is an $(n-k)$-dimensional subspace of $\R^n$ complementary to $\Lambda$.
If $E$ contains a vector $w\in\R^n$ not in $E'+V$, then convex combinations of $w$
and vectors from $E'+V$ give points in $E\cap \Lambda$ which are not contained in $E'$, a contradiction. 
\end{proof}

%
%
\begin{corollary}\label{cor:support}  
If $E$ is as in Theorem \ref{th:strictlyconvex} and $\Lambda\subset \C^n$ is an affine complex subspace 
such that $E\cap \Lambda$ is bounded, then there is a parallel translate $\Lambda'$ of $\Lambda$ with 
$E\cap \Lambda'=\varnothing$, and also one satisfying $E\cap \Lambda'=\{p\}$ with $p\in bE$. 
Furthermore, $\Lambda$ is $E$-stable.
\end{corollary}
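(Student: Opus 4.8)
The plan is to regard $E$ as a closed convex set in $\R^{2n}$ and $\Lambda$ as a real affine subspace, and to feed the pair into Lemma \ref{lem:divide}, using the hypothesis of Theorem \ref{th:strictlyconvex} to rule out the degenerate (tube) alternative. Write $\Lambda_0$ for the linear part of $\Lambda$ and $k=\dim_\C\Lambda$; since $E\cap\Lambda$ is bounded we may assume $k<n$ (if $\Lambda=\C^n$ then $E$ is compact and the assertions are immediate). Applying Lemma \ref{lem:divide} gives a dichotomy: either $E=(E\cap\Lambda)+V$ is a tube over a complementary subspace $V$, or this fails, in which case some parallel translate of $\Lambda$ is supporting for $E$ at a point $q\in bE$.

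First I would exclude the tube alternative. If $E=(E\cap\Lambda)+V$, then $E$ is invariant under translation by $V$, hence so is $bE$; thus for any $p\in bE$ we have $p+V\subset bE$ and therefore $V\subset T_pbE$. A dimension count inside the real hyperplane $T_pbE$ yields $\dim_\R(V\cap T_p^\C bE)\ge \dim_\R V+\dim_\R T_p^\C bE-\dim_\R T_pbE=2(n-k)+2(n-1)-(2n-1)=2(n-k)-1\ge 1$, so there is a nonzero $v\in V\cap T_p^\C bE$. Then $p+\R_+v\subset bE\subset E$ (because $v\in V$) is a real halfline lying in the affine complex hyperplane $T_p^\C bE$, contradicting the hypothesis. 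Hence the tube case is impossible, so by Lemma \ref{lem:divide} not every translate of $\Lambda$ meets $E$: this already yields a translate $\Lambda'$ with $E\cap\Lambda'=\varnothing$, and the lemma simultaneously furnishes a supporting complex translate $\Lambda''$ touching $bE$ at some $q$.

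For $E$-stability I would use this supporting translate: since $\Lambda''$ is a supporting \emph{complex} affine subspace at $q$, we have $\Lambda''\subset T_q^\C bE$, so $E\cap\Lambda''\subset E\cap T_q^\C bE$ contains no real halfline by hypothesis. Lemma \ref{lem:convex-stable} then shows $\Lambda''$ is $E$-stable, and since stability depends only on the direction at infinity (Remark \ref{rem:stable}(b)) it follows that $\Lambda$ itself is $E$-stable.

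The remaining and most delicate point is to produce a translate meeting $E$ in exactly one point. I would start from the empty translate $\Lambda'$ and move it continuously toward $E$ until first contact; by convexity the first contact occurs along a compact convex face $F=E\cap\Lambda''\subset T_q^\C bE$, which is compact precisely because the complex-tangent slice carries no halfline. The main obstacle is that $F$ can a priori be positive-dimensional, since flat faces of $bE$ are not excluded by the no-halfline hypothesis alone; the heart of the argument is therefore to choose the direction of approach, equivalently to select the supporting translate over an exposed point of $E$, so that the contact set collapses to a single boundary point. This is where the strict-convexity-in-complex-directions content of the hypothesis must be exploited, and I expect it to be the crux of the proof; by contrast, the empty-translate and stability assertions above are comparatively routine consequences of Lemmas \ref{lem:divide} and \ref{lem:convex-stable}.
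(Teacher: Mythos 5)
Your treatment of the empty translate, the supporting translate, and $E$-stability is essentially the paper's own proof: the dichotomy from Lemma \ref{lem:divide}, exclusion of the tube alternative by intersecting $V$ with the real hyperplane $T_p^\C bE$ inside $T_p bE$ (your dimension count is exactly the paper's ``contains a real line'' remark), and stability via Lemma \ref{lem:convex-stable} plus translation invariance. Concerning the step you flag as the crux: the paper does not prove it either --- its proof only extracts a supporting translate from the last assertion of Lemma \ref{lem:divide}, and in the only place the corollary is used (the proof of Theorem \ref{th:strictlyconvex}) nothing more than the supporting property $E\cap\Lambda'\subset bE$ is needed. In fact the literal claim $E\cap\Lambda'=\{p\}$ does not follow from the hypotheses and can fail: the hypothesis forbids halflines in $E\cap T_p^\C bE$ but not compact faces. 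For instance, take $E=\{(z,w)\in\C^2:\rho(z)+|w|^2\le 1\}$ with $\rho\ge 0$ convex of class $\Cscr^1$, vanishing exactly on $\{|z|\le 1/2\}$ and tending to infinity; this $E$ satisfies the hypotheses of Theorem \ref{th:strictlyconvex}, yet for $\Lambda=\C\times\{0\}$ every supporting translate $\{w=w_0\}$, $|w_0|=1$, meets $E$ in the disc $\{|z|\le 1/2\}\times\{w_0\}$, and no translate meets $E$ in a single point. So your instinct that single-point contact would require genuine strict convexity in complex directions is correct, but that is not part of the hypothesis; the single-point phrase in the statement is an overstatement, and the supporting version you did prove is all that is established and used in the paper.
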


\begin{proof}
If no translate of $\Lambda$ avoids $E$ then by Lemma \ref{lem:divide} the set $E$ is a tube $E'+V$, where
$E'=E\cap \Lambda$ and $V\subset\C^n$ is a real subspace complementary to $\Lambda$. 
Hence, at every point $p\in bE$ the tangent hyperplane $T_p bE$ contains the affine real 
subspace $p+V$ of dimension at least two. Since $T^\C_p bE$ is a real hyperplane in $T_p bE$,
its intersection with $p+V$ contains a real line, contradicting the hypothesis in Theorem \ref{th:strictlyconvex}.
This shows that there is a translate $\Lambda'$ of $\Lambda$ avoiding $E$, and also one 
which is a supporting subspace for $E$ at a point $p\in bE$ (see Lemma  \ref{lem:divide}). 
In the latter case, $\Lambda'$ (being a complex affine subspace) is contained in $T^\C_p bE$. 
The assumption in Theorem \ref{th:strictlyconvex} that $E\cap T^\C_p bE$ contains no halfline
implies by Lemma \ref{lem:convex-stable} that $T^\C_p bE$ is $E$-stable. 
Hence, $\Lambda'\subset T^\C_p bE$ is also $E$-stable, and the same holds for $\Lambda$ 
since this property is translation invariant.
\end{proof}

%
%
\begin{proof}[Proof of Theorem \ref{th:strictlyconvex}]
We claim that $\overline E\subset\CP^n$ is projectively convex, so the result will follow from Theorem \ref{th:projconvex}.
By Proposition \ref{prop:hyperconvex} we must verify the following conditions:
\begin{enumerate}[\rm (i)]
\item $\C^n\setminus E$ is a union of $E$-stable affine complex hyperplanes, 
\item every $E$-stable complex line $L\subset \C^n$ has a
parallel translate contained in an $E$-stable complex hyperplane $H \subset \C^n\setminus E$, and
\item the set of all $E$-stable affine complex hyperplanes in $\C^n\setminus E$ is connected. 
\end{enumerate}

{\em Proof of (i):} Choose a point $q\in \C^n\setminus E$. Let $p\in bE$ be the closest point to $q$ in $E$.
The tangent plane $T_p bE$ is then a supporting hyperplane for $E$ and is orthogonal to the real line through $p$ and $q$.
The affine complex tangent plane $T^\C_p bE$ is $E$-stable by Lemma \ref{lem:convex-stable}.
The parallel translate $H$ of $T^\C_p bE$ to the point $q$ is then contained in $\C^n\setminus E$, 
and it is $E$-stable since this property is translation invariant.

{\em Proof of (ii):} Let $L$ be an $E$-stable affine complex line. Then $E\cap L$ is bounded,
and Lemma \ref{lem:divide} shows that a parallel translate $L'$ of $L$ 
is tangent to $bE$ at some point $p\in bE$. Then, $L'\subset T^\C_p bE$, which is an $E$-stable
complex hyperplane by Lemma \ref{lem:convex-stable}.
Translating $T^\C_p bE$ away from $E$ gives an $E$-stable hyperplane $H\subset \C^n\setminus E$ 
containing a translate of $L$. 

{\em Proof of (iii):} We claim that a complex affine hyperplane $H$ in $\C^n$ is $E$-stable if and only
if it is parallel to the complex tangent space $T_p^\C bE$ for some point $p\in bE$. 
In one direction, Lemma \ref{lem:convex-stable} shows that for every $p\in bE$ the 
complex tangent space $T^\C_p bE$ is $E$-stable. Conversely, if $H$ is $E$-stable then $H\cap E$ is a bounded set,
and Corollary \ref{cor:support} shows that a parallel translate of $H$ is tangent to $bE$ at some point $p\in bE$,
so this translate equals $T_p^\C bE$. It is easily seen that if $E$ is as in the theorem then 
its boundary $bE$ is connected, so the above shows that the set of $E$-stable hyperplanes is connected. 

It remains to see that the set $\Ecal$ of $E$-stable complex affine hyperplanes in $\C^n$
which do not intersect $E$ is also connected. If a hyperplane $H\subset \C^n\setminus E$ is $E$-stable, then by
Corollary \ref{cor:support} we can translate $H$ within $\C^n\setminus E$ until it hits $bE$ for the first time
at some point $p\in bE$, and this new hyperplane $H'$ is then equal to $T_p^\C bE$ and 
$H'\cap E\subset bE$. This shows that the set of $E$-stable affine complex hyperplanes contained in 
$\C^n\setminus \mathring E$ is connected. Take an interior point $q\in \mathring E= E\setminus bE$; we may assume that 
$q=0$. Consider the decreasing family of closed convex domains $E_k=(1+1/k) E$, i.e., we dilate $E$
by the factor $1+1/k$. Note that $E\subset \mathring E_k$ for all $k$ and $E=\bigcap_{k=1}^\infty E_k$.
Clearly, every $E_k$ has the same properties as $E$ and the same set of stable hyperplanes.
From what has been shown above, the set $\Ecal_k$ of $E$-stable hyperplanes avoiding 
$\C^n\setminus \mathring E_k$ is connected. As $k\to\infty$, the domains $E_k$ shrink down to $E$,
and hence $\Ecal=\bigcup_{k=1}^\infty \Ecal_k$ is an increasing union of connected sets, so it is connected.
\end{proof}

%
%
%
%
\section{Intersections of strictly convex sets}\label{sec:intersection}

It is well known that every closed convex set in a Euclidean space $\mathbb R^m$ is an intersection of halfspaces. 
In this section we characterise those closed sets that are intersections of strongly convex sets. 
A closed set in $\R^m$ is called {\em strongly convex} if it has $\Cscr^2$ boundary and the principal normal curvatures 
of the boundary are strictly positive at every point. Clearly a strongly convex domain is also strictly convex,
but the converse fails in general. 

The following result is used in the proof of Theorem \ref{th:convexnoline}.

\begin{theorem}\label{thm:int}
If $E$ is a closed convex set in $\R^m$ which does not contain an affine line, then 
$E=\bigcap_j E_j$ where every $E_j$ is a closed strongly convex set and $E_{j+1}\subset E_j$ for $j=1,2,\ldots$. 
\end{theorem}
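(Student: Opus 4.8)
The plan is to realize $E$ as a decreasing intersection of strongly convex outer approximations, built by smoothing the support function of $E$ on the cone of admissible normal directions; in the special case of an epigraph this reduces to the smoothing of a convex function into a strictly convex one, which is exactly Azagra's theorem \cite{Azagra2013}.

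\textbf{Reductions.} Since $E$ contains no affine line, its recession cone $C=0^+E$ is pointed, so the polar cone $C^\circ=\{u\in\R^m:\langle u,x\rangle\le 0 \text{ for all } x\in C\}$ has nonempty interior $U:=\mathrm{int}\,C^\circ$. The support function $h=h_E$, $h(u)=\sup_{x\in E}\langle u,x\rangle$, is then finite and convex on $U$, with $\overline{\mathrm{dom}\,h}=C^\circ$, and $E=\bigcap_{u\in U}\{x:\langle u,x\rangle\le h(u)\}$. Fixing $w\in U$, the slabs $E\cap\{\langle w,\cdot\rangle\le t\}$ are compact, so $E$ is bounded below in the $w$-direction and has recession cone exactly $C$. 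I would first treat the case in which $E$ is full-dimensional; the lower-dimensional case is reduced to it at the very end by thickening $E$ inside its affine span by a shrinking family of ellipsoidal tubes, exactly as a segment in the plane is the intersection of the flattening ellipses $\{x^2+y^2/\epsilon^2\le 1\}$.

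\textbf{Construction.} For each small $\delta>0$ I would produce a smooth, positively $1$-homogeneous function $h_\delta:U\to\R$ enjoying three properties: $h_\delta\ge h$, $h_\delta\downarrow h$ pointwise as $\delta\downarrow 0$, and $D^2 h_\delta(u)|_{u^\perp}\succ 0$ for every $u\in U$. Smoothness is obtained by mollifying $h$ over nearby directions, and Jensen's inequality keeps this mollification convex and $\ge h$; one then adds $\delta\rho$, where $\rho$ is a fixed smooth $1$-homogeneous function with $\rho>0$ and $D^2\rho(u)|_{u^\perp}\succ 0$ on $U$, so that the curvature becomes strictly positive while only $O(\delta)$ is added and $h_\delta\downarrow h$. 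Setting $E_\delta=\{x:\langle u,x\rangle\le h_\delta(u) \text{ for all } u\in U\}$ and invoking the classical dictionary between support functions and boundary curvature (positive definiteness of $D^2 h|_{u^\perp}$ is equivalent to strictly positive, finite principal curvatures of the boundary), each $E_\delta$ is a closed strongly convex set. By construction $E_\delta\supseteq E$, the recession cone of $E_\delta$ is again $C$, and $\bigcap_\delta E_\delta=E$ because $h_\delta\downarrow h$; taking $\delta=1/j$ yields the nested sequence $E_1\supseteq E_2\supseteq\cdots$ with $\bigcap_j E_j=E$. In the special case $E=\{x_m\ge\phi(x')\}$ this amounts to replacing $\phi$ by a strictly convex function lying slightly below it, as furnished by \cite{Azagra2013}, the boundary curvature coming from the strict convexity of the smoothed $\phi$.

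\textbf{Main obstacle.} The delicate step is the behavior of $h_\delta$ as $u$ approaches $\partial U$, the boundary of the barrier cone: there $h\to +\infty$ and the boundary of $E$ escapes to infinity asymptotically to $\partial C$, so the mollification and the curvature-forcing term must be cut off with a direction-dependent scale that degenerates near $\partial U$ and is adapted to $C$. One must verify simultaneously that (i) the approximation error is controlled \emph{uniformly up to infinity}, so that $E_\delta\supseteq E$ and $E_\delta$ retains precisely the recession cone $C$, and (ii) the principal curvatures stay strictly positive all the way out, even though they must tend to $0$ along the asymptotic directions. Reconciling the smoothing of the finite part of $\partial E$ with its asymptotic cone, and, in the bounded directions, handling the "equator" where the lower and upper supporting graphs meet, is the heart of the argument; once these uniform estimates are secured, convexity, containment, nestedness and the identity $\bigcap_j E_j=E$ follow routinely.
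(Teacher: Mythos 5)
Your strategy (smoothing the support function on the dual cone) is genuinely different from the paper's, which never dualizes: the paper separates each point $p\in\R^m\setminus E$ from $E$ by an explicit strongly convex function $\rho=\E^{f(x')-x_m}-1$ manufactured from a projective hyperplane through $p$ missing $\overline E\subset\RP^m$ (this is where the no-line hypothesis enters, via projective convexity of $\overline E$), and then glues countably many such separators into a nested sequence using a regularized maximum. However, your construction as described has a genuine gap, and it sits exactly where you place your ``main obstacle''. The condition $D^2h_\delta(u)\vert_{u^\perp}\succ 0$ for $u$ in the \emph{open} cone $U=\mathrm{int}\,C^\circ$ only controls the curvature of $bE_\delta$ at boundary points of the form $\nabla h_\delta(u)$ with $u\in U$; when $E$ is unbounded, $bE_\delta$ will in general contain points all of whose supporting normals lie in $\partial U$, and these are invisible to your hypothesis. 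This is not a technicality: take $E=\{(x,y)\in\R^2:y\ge|x|\}$, so that $h\equiv 0$ on $C^\circ$ and mollification over directions changes nothing; with $\rho(u)=|u|$ your recipe gives $h_\delta(u)=\delta|u|$ and $E_\delta=\{x:\dist(x,E)\le\delta\}=E+\delta\overline{\B}$, whose boundary contains two full straight rays. So $E_\delta$ is not strongly convex --- indeed not even strictly convex --- even though $D^2h_\delta\vert_{u^\perp}\succ0$ holds at every $u\in U$.

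What the example shows is that a correct perturbation cannot be ``mollify, then add $\delta\rho$ for a fixed smooth strongly curved $1$-homogeneous $\rho$'': to force every finite boundary point of $E_\delta$ to have its normal in $U$, the radii of curvature, i.e.\ the eigenvalues of $D^2h_\delta(u)\vert_{u^\perp}$, must blow up as $u\to\partial U$ (compare the strongly convex enlargement $\{y\ge\sqrt{1+x^2}-1-\delta\}\supset E$ of the cone above, whose support function $(1+\delta)(-v)-\sqrt{v^2-u^2}$ has unbounded gradient and Hessian at $|u|=|v|$), and this blow-up must be matched to both $C$ and the growth of $h$ near $\partial U$. Neither the mollification (which does nothing when $h$ is linear near $\partial U$) nor the term $\delta\rho$ supplies it. So the heart of the theorem is precisely the step you defer, and as it stands the proposal does not prove the statement. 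The paper's exponential separator is one way around this: $\E^{f(x')-x_m}-1$ is strongly convex on all of $\R^m$ even though its sublevel sets contain recession directions, which is exactly the uniform-at-infinity curvature control your dual construction is missing.
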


Conversely, a convex set containing a line is not contained in any strictly convex set.

\begin{proof}
Recall that in \cite[Chapter 1]{AnderssonPassareSigurdsson2004} 
a set $K\subset\mathbb{RP}^m$ is called convex if $(i)$ $K$ does not contain a projective line, and 
$(ii)$ the intersection of any projective line with $K$ is connected.
(There are other notions of convexity in projective spaces, but this is the one relevant here.) 
The assumption that $E\subset\R^m$ is convex and does not contain any real line 
guarantees that its closure $K=\overline E$ in $\mathbb{RP}^m$ is convex. 

Fix a point $p\in\mathbb{RP}^m\setminus K$. By \cite[Theorem 1.3.11]{AnderssonPassareSigurdsson2004}
there is a hyperplane $H\subset \mathbb{RP}^m\setminus K$ with $p\in H$. 
If $p\in\mathbb R^m$ then the real hyperplane $H'=H\cap \mathbb R^m$ contains $p$ and a proper cone $Q$ around $H'$ 
avoids $E$. (In the terminology introduced in Lemma \ref{lem:stable}, $H'$ is $E$-stable.)  
We may assume that $p$ is the origin, $H'=\{x=(x_1,\ldots,x_m)\in\mathbb R^m: x_m=0\}$, and 
$E$ is contained in the halfspace $\{x_m>0\}$. Then there exists a smooth strongly convex function 
$f(x_1,\ldots,x_{m-1})$ (i.e., with positive definite Hessian at every point) such that $f(0)=0$ and the 
graph of $f$ is contained in a strictly smaller cone $Q'\subset Q$. For such $f$ and $\epsilon>0$ small enough the set 
\[
	E' = \{x\in\mathbb R^m: x_m\geq f(x_1,...,x_{m-1})+\epsilon\}
\]
is strongy convex with $E\subset E'$ and $0\in\mathbb R^m\setminus E'$.  Hence, the function
\[
	\rho(x_1,\ldots,x_m)=\E^{f(x_1,...,x_{m-1})-x_m} - 1
\]
is strongly convex on $\R^m$, $\rho(0)=0$, and $\rho\le \E^{-\epsilon}-1<0$ on $E$. 

By scaling, it follows that for every compact set $K\subset\mathbb R^m\setminus E$ there exist 
strongly convex functions $\rho_1,...,\rho_k$ such that $E\subset\{x\in\mathbb R^n:\rho_j(x)\leq -1, j=1,..,k\}$
such that for every $x\in K$ we have that $\rho_j(x)>0$ for some $j\in\{1,...,k\}$. 
Exhausting $\mathbb R^m\setminus E$ by compact sets, 
it follows that there exist  strongly convex functions $\{\rho_j\}_{j=1}^\infty$ such that 
$E=\bigcap_{j=1}^\infty \{x\in\mathbb R^m:\rho_j(x)\leq 0\}$ and $\rho_j(x)\leq -1$ for $x\in E$ for all $j$. 

It remains to find a decreasing sequence of smoothly bounded strongly convex sets 
$E_1\supset E_2\supset\cdots \supset \bigcap_{k=1}^\infty E_k=E$.
We begin by taking $\tau_1=\rho_1$ and $E_1=E'_1=\{\tau_1\le 0\}$. 
To get $E_2$, we take the convex set $E_1\cap E'_2=\{\max\{\tau_1,\rho_2\}\le 0\}$ and smoothen the corners.
This is done by a {\em regularized maximum function} defined as follows (see \cite[p.\ 69]{Forstneric2017E}). 
Given a number $\delta>0$ we select a nonnegative smooth even function $\xi\ge 0$ on $\R$ with support in 
$[-\frac{\delta}{2},\frac{\delta}{2}]$ such that $\int \xi(t)dt=1$, and we set for $(u_1,u_2)\in\R^2$:
\begin{equation}\label{eq:rmax}
 	   \rmax\{u_1,u_2\} = \int_{\R^2}\max\{t_1+u_1,t_2+u_2\}\, \xi(t_1) \xi(t_2) dt_1\, dt_2.
\end{equation}
It is easily verified that the function $\rmax$ is smooth, increasing in every variable, and 
convex jointly in both variables. Hence, if $u_1(x)$ and $u_2(x)$ are (strongly) convex functions then 
$\rmax\{u_1(x),u_2(x)\}$ is also (strongly) convex. Moreover, we have that 
\[
	\max\{u_1,u_2\} \le \rmax\{u_1,u_2\}\le  \max\{u_1,u_2\}+\delta\ \ \text{for all}\ (u_1,u_2)\in\R^2
\]
and 
\[
    \rmax\{u_1,u_2\} = \begin{cases}    u_1, & \text{if}\ u_2 \le u_1-\delta,\\
    							     u_2, & \text{if}\ u_1 \le u_2-\delta. 
				   \end{cases}
\]
In a similar one defines $\rmax$ for any finite number of variables.

This shows that for a suitable choice of $\xi$ in the definition of $\rmax$ the function 
\[
	\tau_2:=\rmax\{\tau_1,\rho_2\}:\R^m\to\R
\] 
is smooth strongly convex and satisfies $\tau_2(a_1)>0,\ \tau_2(a_2)>0$, and $\tau_2\le -c_2<0$ 
on $E$ for some $c_2>0$. The set $E_2=\{\tau_2\le 0\}$ is strongly convex with smooth boundary, and 
it satisfies $E\subset \mathring E_2\subset E_2\subset E_1\cap E'_2$ and $a_2\notin E_2$.
 
Assume inductively that we have constructed strongly convex 
functions $\tau_j:\R^m\to\R$ for $j=1,\ldots,k$ such that the sets $E_j=\{\tau_j\le 0\}$ 
satisfy $E\subset E_k\subset E_{k-1}\cdots\subset E_1$, 
$\tau_j\le c_j<0$ on $E$ for all $j=1,\ldots,k$, and $\tau_j(a_i)>0$ for $1\le i\le j$). We then take 
\[
	\tau_{k+1}=\rmax\{\tau_k,\rho_{k+1}\}
\] 
for a suitable choice of the weight function $\xi$ in $\rmax$ to obtain the next strongly
convex function $\tau_{k+1}$ and the next set $E_{k+1}=\{\tau_{k+1}\le 0\}$ in the sequence.
\end{proof}

%
%
%
%
\subsection*{Acknowledgements}
The first named author is supported by research program P1-0291 and grants J1-9104 and N1-0237
from ARRS, Republic of Slovenia. The authors wish to thank Barbara Drinovec Drnov\v sek and Geir Dahl 
for discussions regarding convexity theory, and Yuta Kusakabe and Finnur L\'arusson for their 
helpful comments.




\newpage


\noindent Franc Forstneri\v c

\noindent Faculty of Mathematics and Physics, University of Ljubljana, Jadranska 19, SI--1000 Ljubljana, Slovenia

\noindent 
Institute of Mathematics, Physics and Mechanics, Jadranska 19, SI--1000 Ljubljana, Slovenia.

\noindent e-mail: {\tt franc.forstneric@fmf.uni-lj.si}

\vspace*{5mm}
\noindent Erlend Forn\ae ss Wold

\noindent Department of Mathematics, University of Oslo, PO-BOX 1053, Blindern, 0316 Oslo. Norway. 

\noindent e-mail: {\tt erlendfw@math.uio.no}

\end{document}